\numberwithin{equation}{section}
\numberwithin{figure}{section}
\numberwithin{table}{section}
\theoremstyle{plain}
\newtheorem{prop}{Proposition}[section]
\newtheorem{lemma}[prop]{Lemma}
\newtheorem{theorem}[prop]{Theorem}
\theoremstyle{definition}
\newtheorem{defin}{Definition}[section]
\newenvironment{proofoft}[1]{{\em Proof of Theorem #1. }}{$\Box$ \vspace{1em}}
\newenvironment{proofofl}[1]{{\em Proof of Lemma #1. }}{$\Box$ \vspace{1em}}
\newcommand{\E}{{\mathbf E}}
\newcommand{\Z}{{\mathbb Z}}
\newcommand{\Zd}{{\mathbb Z}^d}
\newcommand{\RR}{{\mathbb R}}
\newcommand{\N}{{\mathbb N}}
\newcommand{\Remark}{\em Remark: \rm}
\newcommand{\CL}{{}_L\,{}^{\!\!}C}
\newcommand{\dptn}{\,{}^{\!\!}}
\def\P{{\mathbf P}}
\begin{document}

\title[The critical contact process in a varying environment dies out]
{The critical contact process in a randomly evolving environment dies out}

\author[J.~Steif]{Jeffrey E.~Steif$^{1,2}$}
\address[J.~Steif and M.~Warfheimer]{Department of Mathematical Sciences, Chalmers University of Technology and University of Gothenburg, SE-41296 Gothenburg, Sweden}
\email[J.~Steif]{steif@chalmers.se}
\urladdr{http://www.math.chalmers.se/~steif}
\thanks{$^1$Research partially supported by the Swedish Natural Science
Research Council.}

\author[M.~Warfheimer]{Marcus Warfheimer$^2$}
\email{marcus.warfheimer@gmail.com}
\urladdr{http://www.math.chalmers.se/~warfheim}
\thanks{$^2$Research partially supported by the Göran Gustafsson Foundation for Research in Natural Sciences and Medicine.}

\keywords{Contact process, varying environment}
\subjclass[2000]{60K35}

\begin{abstract}
Bezuidenhout and Grimmett proved that the critical contact process dies out. Here, we generalize the result to the so called contact process in a random evolving environment (CPREE), introduced by Erik Broman. This process is a generalization of the contact process where the recovery rate can vary between two values. The rate which it chooses is determined by a background process, which evolves independently at different sites. As for the contact process, we can similarly define a critical value in terms of survival for this process. In this paper we prove that this definition is independent of how we start the background process, that finite and infinite survival (meaning nontriviality of the upper invariant measure) are equivalent and finally that the process dies out at criticality. 
\end{abstract}

\maketitle

\section{Introduction and main results}

The contact process, introduced by Harris \cite{Harris1}, is a simple model for the spread of an infection on a lattice. The state at a certain time is described by a configuration, $\eta \in \{0,1\}^{\Zd}$, where $\eta(x)=0$ means that the individual at location $x$ is healthy and $\eta(x)=1$ means it is infected. The model is such that infected people recover at rate $1$ and healthy people are infected with a rate proportional to the number of infected neighbors. In more mathematical language, the contact process is a Markov process, $\{\eta_t\}_{t\geq 0}$, with state space $\{0,1\}^{\Zd}$ where the configuration changes its state at site $x\in\Zd$ as follows: 
\[
\begin{array}{lllll}
\eta \to \eta_x   \qquad & \text{with rate} &\qquad 1 \qquad &\text{if} \quad & \eta(x)=1 \\
\eta \to \eta_x   \qquad & \text{with rate} &\qquad \lambda\displaystyle\sum_{y\sim x} \eta(y) \qquad &\text{if} \quad & \eta(x)=0,
\end{array}
\]
where $y\sim x$ means that $x$ and $y$ are neighbors, 
\[
\eta_x(y)=
\begin{cases}
\eta(y) & \text{if $y\neq x$} \\
1-\eta(x) & \text{if $y=x$}
\end{cases}
\]
and $\lambda$ is a positive parameter called the infection rate. See the standard references Liggett \cite{Liggett85} and Durrett \cite{Durrett88} for how these informal rates determine a Markov process and for much on the contact process as well as other interacting particle systems. Denote the distribution of this process when it starts with the configuration $\eta$ by $\P_\lambda^\eta$. We say that the process \emph{dies out at $\lambda$} if 
\[
\P_{\lambda}^{\{0\}}[\,\eta_t= \emptyset\text{ some }t \geq 0 \,]=1;
\]
otherwise it is said to \emph{survive at $\lambda$}. Here, the initial configuration $\{0\}$ means there is a single infection at the origin and the configuration \small$\emptyset$ \normalsize means the element in $\{0,1\}^{\Zd}$ consisting of all zeros. (As usual, we identify $\{0,1\}^{\Zd}$ with subsets of $\Zd$.) Using an easy monotonicity in $\lambda$, it is natural to define the critical value
\[
\lambda_c:=\inf\{\,\lambda:\:\P_\lambda^{\{0\}}[\,\eta_t\neq\emptyset \text{ for all }t \geq 0 \,]>0 \,\}.
\]
A fundamental first question concerning this model is whether it survives when $\lambda$ is large and whether it dies out for small values of $\lambda$, i.e.$\dptn$ whether $0<\lambda_c<\infty$, and it is not very hard to show that this indeed is the case. Furthermore, since the contact process is attractive (see Liggett \cite{Liggett85} for this definition), we can define
\[
\lambda_c^\prime:=\inf\{\,\lambda:\:\bar{\nu}_\lambda\neq\delta_\emptyset\,\},
\]
where $\bar{\nu}_\lambda$ is the so called upper invariant measure, defined to be the limiting distribution starting from all 1's. A self-duality equation (see \cite{Durrett88} or \cite{Liggett85}) easily leads to $\lambda_c=\lambda_c^\prime$.  A much harder question, and one which had been open for approximately 15 years, is whether the contact process survives or dies out \emph{at} the critical value. A celebrated theorem by Bezuidenhout and Grimmett, \cite{GrimBez}, gives us the answer.
\begin{theorem}[Bezuidenhout and Grimmett]\label{paper1:GB}
The critical contact process dies out.
\end{theorem}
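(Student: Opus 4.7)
My plan is to argue by contradiction via a block (renormalization) construction. Suppose that the process survives at $\lambda_c$, so $\P_{\lambda_c}^{\{0\}}[\eta_t\neq\emptyset\text{ for all }t]>0$. The goal is to show that under this assumption survival already holds for some $\lambda<\lambda_c$, contradicting the definition of $\lambda_c$. Throughout I would work in the graphical representation, where the process is driven by independent Poisson processes of deaths at each site and infection arrows along each directed edge, so that events depending on disjoint space-time regions are independent.

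The central and most delicate step is a \emph{finite space-time survival lemma}: for every $\epsilon>0$ there should exist large $L,T$ and an integer $K$ such that, with probability at least $1-\epsilon$, the process started from a single infection at the origin produces at least $K$ infected sites on a prescribed face of the space-time block $[-L,L]^d\times[0,T]$, situated so that they can serve as seeds for a restart in an adjacent translated block. I would derive this from mere survival by a restart argument: survival implies the infected set is nonempty at arbitrarily large times with uniformly positive probability, and combining translation invariance, the Markov property, and a pigeonhole/restart scheme one argues that the cluster must in fact propagate spatially, rather than being trapped in a finite window. Ruling out this trapped scenario, and ensuring that enough seeds actually reach the prescribed spatial face, is the main obstacle.

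Given the lemma, the next step is to set up a $1$-dependent oriented site percolation on a coarse-grained lattice whose sites correspond to translates of $[-L,L]^d\times[0,T]$. A coarse site is declared open when the finite survival event occurs there using seeds provided by previously open neighbors. Because the triggering events live in disjoint regions of the driving Poisson processes, the resulting field is $1$-dependent with marginal density at least $1-\epsilon$. By the Liggett--Schonmann--Stacey comparison theorem, taking $\epsilon$ sufficiently small forces stochastic domination by a supercritical Bernoulli oriented percolation, so the renormalized cluster is infinite with positive probability, and this lifts back to survival of the original contact process.

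The final step is a perturbation argument. The entire block construction depends only on finitely many Poisson arrivals in a bounded space-time region, so the probability that a coarse site is open is continuous in $\lambda$. Hence for $\lambda<\lambda_c$ sufficiently close to $\lambda_c$ the probability is still at least $1-2\epsilon$, the renormalized process still percolates, and the contact process survives at this subcritical $\lambda$ --- the desired contradiction. Essentially all the work, and all the Bezuidenhout--Grimmett ingenuity, is concentrated in the finite space-time survival lemma; the percolation comparison and the continuity/perturbation argument are then standard renormalization machinery.
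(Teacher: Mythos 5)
Your outline is the correct Bezuidenhout--Grimmett strategy, and it is exactly the route the paper itself takes: the paper does not reprove Theorem~\ref{paper1:GB} (it cites \cite{GrimBez} and \cite{Liggett99}) but its Section~4 carries out precisely this program for the CPREE --- a finite space-time condition, a steering lemma, a comparison with $1$-dependent oriented percolation via Liggett--Schonmann--Stacey, and a continuity-in-the-parameter perturbation. Your descriptions of the percolation comparison and of the perturbation step are accurate and these parts are indeed routine once the finite space-time condition is in hand.

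The genuine gap is that the one step you correctly identify as carrying all the difficulty --- the finite space-time survival lemma --- is not proved but only gestured at, and the gesture (``a pigeonhole/restart scheme... argues that the cluster must in fact propagate spatially'') does not describe a mechanism that works. A restart argument alone cannot rule out the scenario in which the surviving cluster wanders without ever producing many simultaneously infected sites on a prescribed face of a box. The actual argument has several distinct ingredients, each nontrivial: (i) on the event of survival the number of infected sites tends to infinity a.s., proved by a martingale-convergence argument combined with the uniform lower bound $(\delta/(\delta+2d\lambda))^{M}$ on extinction from any configuration of at most $M$ sites (cf.\ the proof of Lemma~\ref{paper1:lemma2}); (ii) a conditional-probability dichotomy showing that if both the number of infected sites at the top of the space-time box $[-L,L]^d\times[0,T]$ and the number of well-separated points reached on its lateral boundary stay bounded, then extinction has probability bounded below, so on survival at least one of these counts must be large (cf.\ Lemma~\ref{paper1:lemma3}); (iii) FKG-type positive-correlation inequalities to pass from the whole box to a single orthant and a single face (cf.\ Propositions~\ref{paper1:prop1} and~\ref{paper1:prop2}); and (iv) a seeding/steering argument converting many well-separated infected space-time points into a fully infected translate of $[-n,n]^d$ in a prescribed target region. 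Your sketch collapses (i)--(iv) into a single unproved claim, so as written it is a roadmap rather than a proof; to complete it you would need to supply the analogues of Lemmas~\ref{paper1:lemma1}--\ref{paper1:lemma3} and Propositions~\ref{paper1:prop1}--\ref{paper1:prop2} for the ordinary contact process, which is where the entire content of the theorem lies.
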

\noindent For a proof of this, see \cite{GrimBez} or \cite{Liggett99}. 

Note that changing $\lambda$ to $1$ and the recovery rate to $\delta$ corresponds to a trivial time scaling and so the process could have instead been defined in this way. We will denote the corresponding critical value by $\delta_c$. This should be kept in mind in what follows. 

In 1991, Bramson, Durrett and Schonmann \cite{Durrett1} introduced the contact process in a random environment, in which the recovery rates are taken to be independently and identically distributed random variables and then fixed in time. For further results concerning this model see for example, Liggett \cite{Liggett1}, Klein \cite{Extinction1} and Newman and Volchan \cite{Survival1}. Recently, Broman \cite{Broman} introduced another variant where the environment changes in time in a simple Markovian way. More precisely, he considered the Markov process, $\{(B_t,C_t)\}_{t \geq 0}$ on $\{0,1\}^{\Zd}\times\{0,1\}^{\Zd}$ described by the following rates at a site $x$:
\[
\begin{array}{ll}
\textrm{transition} \qquad & \qquad \textrm{rate} \vspace{7pt} \\ 
(0,0) \rightarrow (0,1) \qquad &  \qquad\displaystyle\sum_{y\sim x} C(y) \\
(1,0) \rightarrow (1,1) \qquad &  \qquad\displaystyle\sum_{y\sim x} C(y) \\
(0,1) \rightarrow (0,0) \qquad &  \qquad\delta_0 \\
(1,1) \rightarrow (1,0) \qquad &  \qquad\delta_1 \\
(0,0) \rightarrow (1,0) \qquad &  \qquad\gamma p \\
(0,1) \rightarrow (1,1) \qquad &  \qquad\gamma p \\
(1,0) \rightarrow (0,0) \qquad &  \qquad\gamma (1-p) \\
(1,1) \rightarrow (0,1) \qquad &  \qquad\gamma (1-p) 
\end{array}
\]
where $d \geq 1$, $\gamma, \delta_0, \delta_1>0$ with $ \delta_1 \leq\delta_0 $ and $p \in [0,1]$. In other words, at each site $x$ independently, $\{B_t(x)\}_{t\geq 0}$ is a 2-state Markov chain with infinitesimal matrix
\[
\begin{pmatrix}
-\gamma p & \gamma p \\
\gamma (1-p) & -\gamma (1-p)
\end{pmatrix}
\]
which in turn determines the recovery rate of $\{C_t(x)\}_{t\geq 0}$ in the following way. For each $t$, the recovery rate at location $x$ is $\delta_0$ or $\delta_1$ depending on whether $B_t(x)=0$ or $B_t(x)=1$. In addition, the infection rate is always taken to be the number of infected neighbors. (Actually, Broman did this on a more general graph, but here we will only consider $\Zd$.) Broman referred to $\{B_t\}_{t\geq 0}$ as \emph{the background process} and the whole process $\{(B_t,C_t)\}_{t\geq 0}$ as \emph{the contact process in a randomly evolving environment} (CPREE). Let $\{C_t^{\rho}\}_{t\geq 0}$ denote the right marginal where the initial distribution of the whole process is $\rho$. In the case where $\rho=\mu\times\nu$ we write $\{C_t^{\mu,\nu}\}_{t\geq 0}$. Furthermore, let $\P_p$ denote the measure governing the process for the parameters $p$, $\gamma$, $\delta_0$ and $\delta_1$, where $\gamma$, $\delta_0$ and $\delta_1$ are considered fixed. Also, denote the product measure with density $q\in [0,1]$ by $\pi_q$. Broman defined the critical value 
\[
p_c:=\inf\left\{ p:\, \P_p [\,C_t^{\pi_p,\{0\}}\neq \emptyset\, \forall t > 0\,] > 0 \right\} 
\]
($p_c$ is taken to be $1$ if no $p$ satisfies this) and proved that if $\delta_1<\delta_c<\delta_0$ and $\gamma > \max(2d,\delta_c-\delta_1)$, then $p_c\in (0,1)$. At the end of his paper he asked whether the critical value is affected if we vary the initial distribution of the background process. Our first result answers this question. Given $\gamma, \delta_0, \delta_1 >0$ with $\delta_1 \leq \delta_0$, $q \in [0,1]$ and $A \subseteq \Zd$ with $\vert A \vert < \infty$, define
\[
p_c(q,A):=\inf\left\{ p:\, \P_p [\,C_t^{\pi_q,A}\neq \emptyset\, \forall t > 0\,] > 0 \right\}.
\]
\begin{theorem}\label{paper1:thm1}
Given $A$,$A^\prime\subseteq\Zd$ with $|A|$, $|A^\prime|<\infty$ and $p$, $q$, $q^\prime \in [0,1]$,
\begin{equation}\label{paper1:thm1eq}
\P_p[\,C_t^{\pi_q,A} \neq \emptyset\, \forall t > 0\,]>0 \quad \iff \quad \P_p[\,C_t^{\pi_{q^\prime},A^\prime} \neq \emptyset\, \forall t > 0\,]>0.
\end{equation}
In particular, $p_c(q,A)$ is independent of both $q$ and $A$.
\end{theorem}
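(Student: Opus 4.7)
The theorem splits into two independent assertions: (i) for fixed $q$, survival depends on $A$ only through whether $A$ is nonempty; and (ii) for fixed finite nonempty $A$ (which by (i) may be taken to be $\{0\}$), survival is independent of $q$.

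For (i) I would use the Harris graphical construction with a common realization of the background trajectory. In the coupled construction, the set-valued $C$-process satisfies $C_t^A=\bigcup_{x\in A}C_t^{\{x\}}$. A union bound together with the translation invariance of $\pi_q$ gives
\[
\P_p[C_t^{\pi_q,A}\neq\emptyset\ \forall t]\leq |A|\cdot\P_p[C_t^{\pi_q,\{0\}}\neq\emptyset\ \forall t],
\]
and attractiveness in the initial infection set (larger $A$ produces a pointwise-larger process under the shared-Poisson coupling) yields the reverse implication. So for fixed $q$, survival is equivalent for any finite nonempty $A$.

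For (ii), first note the CPREE is attractive in the initial background: since $\delta_1\leq\delta_0$, a larger $B$ reduces the recovery rate and produces a stochastically larger $C$. A monotone coupling (shared Poissons and the standard coupling $B_0(x)=\mathbf{1}[U_x\leq q]$) makes $q\mapsto\P_p[C_t^{\pi_q,\{0\}}\neq\emptyset\ \forall t]$ monotone increasing. The substantive content of (ii) is that this monotone function cannot take both zero and positive values. The main step is a restart argument: fix a target $q'$, start from $(\pi_{q'},\{0\})$, and run for time $T$. For any finite $A_0\ni 0$, finite box $K\supseteq A_0$, $\sigma\in\{0,1\}^K$, and $T>0$, the event $G_T:=\{C_T=A_0,\ B_T|_K=\sigma\}$ has strictly positive probability, since the $B$-flip Poisson processes are independent of the infection and recovery Poissons, the two-state $B$-chain reaches any configuration on $K$ from any $B_0$ with positive probability, and any prescribed finite-time realization of $C$ is possible under any background trajectory (rates are uniformly bounded). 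By the Markov property and the independence of $B$ across sites,
\[
\P_p[C_t^{\pi_{q'},\{0\}}\neq\emptyset\ \forall t]\ \geq\ \P_p[G_T]\cdot\P_p[C_t^{\mu_T,A_0}\neq\emptyset\ \forall t],
\]
where $\mu_T$ is the product measure on $\{0,1\}^{\Zd}$ equal to $\delta_\sigma$ on $K$ and to Bernoulli of density $p_T(q'):=p+(q'-p)e^{-\gamma T}$ on $K^c$. Choosing $\sigma\equiv 1$ on $K$ and $T$ large enough that $p_T(q')\geq q$, attractiveness in $B$ yields that $\mu_T$ stochastically dominates $\pi_q$, so the right-hand factor is bounded below by $\P_p[C_t^{\pi_q,A_0}\neq\emptyset\ \forall t]>0$ by the hypothesis and part (i).

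The main obstacle is the case $q>p$: since $p_T(q')\to p$, one cannot arrange $p_T(q')\geq q$, and the domination $\mu_T\succeq\pi_q$ on $K^c$ fails. My plan for this case is a preliminary reduction to the stationary background: starting from $(\pi_q,\{0\})$ and using the hypothesis together with $B_t\to\pi_p$, the Markov property should yield $\P_p[C_t^{\pi_p,A^*}\neq\emptyset\ \forall t]>0$ for some finite $A^*\ni 0$, after which the previous restart argument applies with $q$ replaced by $p$ (which trivially satisfies $p\leq p$). Formalizing this reduction—passing from a $B_T$-distribution merely close to $\pi_p$ on finite boxes to genuine survival from a $\pi_p$-distributed initial background—is the delicate technical step that I expect to be the crux of the proof.
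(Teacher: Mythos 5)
Your part (i) (dependence on $A$ only through nonemptiness, via additivity, translation invariance and attractiveness) matches the paper's first reduction and is fine. The problem is part (ii), and the gap you yourself flag is not a technicality but the entire content of the theorem. By background-attractiveness the only hard implication is: survival started from a \emph{favorable} background (density $q>p$, in the extreme case all ones) implies survival started from an \emph{unfavorable} one (in the extreme case all zeros). Your restart argument cannot reach this case: you need $p_T(q')\geq q$, but $p_T(q')\to p$, so the domination $\mu_T\succeq\pi_q$ is unavailable whenever $q>p$ (and already fails for $q=p$, $q'<p$, since $p_T(q')<p$ for all finite $T$). The proposed fix --- ``run until $B_T$ is close to $\pi_p$ and restart'' --- does not go through as stated, because conditioning on survival up to time $T$ positively biases the background toward favorable configurations; one cannot replace the conditional law of $(B_T,C_T)$ by a product measure with a stochastically smaller background without a genuine argument, and that argument is exactly what is missing. (A secondary, fixable issue: even in the regime where your restart works, the conditional law of $B_T$ on $K^c$ given $G_T$ is not product unless you also force the infection to stay inside $K$ up to time $T$, since $\{C_T=A_0\}$ depends on the background at unboundedly many sites.)

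The paper closes precisely this gap by a different mechanism. It couples $B^{\emptyset}$ and $B^{\Zd}$ through the same flip Poisson processes, so at each site the two backgrounds coalesce forever after an exponential time of rate $\gamma$; writing $\varphi_t$ for the coalesced region, it shows (Lemma 3.1, via the linear-growth bound for the Richardson model $\tilde C_t^{\{0\}}$ plus Borel--Cantelli) that almost surely $\tilde C_t^{\{0\}}\subseteq\varphi_t$ for all large $t$, i.e.\ the initial background is forgotten, pathwise, on every site the infection could possibly reach. This reduces the discrepancy between the two processes to a bounded space--time window $[-m,m]^d\times[0,n]$, where one pays a strictly positive price: on the random set $U$ where $B^{\Zd}=1$ one demands no $N^{\delta_0-\delta_1}$ marks, which has conditional probability $e^{-(\delta_0-\delta_1)\mathcal L(U)}>0$ and forces the two infection processes to coincide exactly, transferring survival from the all-ones start to the all-zeros start. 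Some argument of this ``environment is forgotten faster than the infection spreads'' type (or an ergodicity argument as in the H\"aggstr\"om remark after Lemma 4.1) is what your proposal would need to supply; without it the proof is incomplete at its crux.
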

We will let $p_c$ denote this common value. (Recall, $p_c$ of course depends on $\gamma$, $\delta_0$ and $\delta_1$.) Also, if $\P_p[\,C_t^{\pi_q,A} \neq \emptyset\, \forall t > 0\,]>0$ holds (which we now know is independent of $q$ and $A$), we say that $\{C_t\}$ \emph{survives at $p$}; otherwise it is said to \emph{die out at $p$}.  

Later on, we will see that the process is attractive. (See Proposition~\ref{paper1:attr}.) This yields that the limiting distribution starting from all 1's exists and we will denote the limit by $\bar{\nu}_p$. Also, we will refer to this measure as the \emph{upper invariant measure}. This measure gives us another natural way to define a critical value:
\[
p_c^\prime:=\inf\{\, p: \, \bar{\nu}_p \neq \pi_p\times\delta_{\emptyset}\,\}.
\]
For general attractive systems it might or might not be the case that these definitions coincide. However, for the ordinary contact process, this is the case (due to its self-duality) and our next result shows that this is also true in our situation.
\begin{theorem}\label{paper1:thm2}
$\{C_t\}$ survives at $p$ if and only if $\bar{\nu}_p \neq \pi_p\times\delta_{\emptyset}$. In particular $p_c=p_c^\prime$.
\end{theorem}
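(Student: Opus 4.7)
The plan is to derive both directions of Theorem~\ref{paper1:thm2} from a single self-duality identity for the $C$-component: for every $t\geq 0$ and every pair of finite sets $A,A'\subseteq\Zd$,
\begin{equation}\label{dualityId}
\P_p\bigl[\,C_t^{\pi_p,A}\cap A'\neq\emptyset\,\bigr]=\P_p\bigl[\,C_t^{\pi_p,A'}\cap A\neq\emptyset\,\bigr].
\end{equation}
This is the CPREE-analogue of the classical self-duality of the contact process, and will be obtained by combining the graphical representation of $\{C_t\}$ conditional on the background with the time-reversibility of $\{B_t\}$ started in its stationary product measure $\pi_p$.

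\textbf{The easy direction.}
Suppose $\{C_t\}$ survives at $p$. By Theorem~\ref{paper1:thm1} and attractiveness (Proposition~\ref{paper1:attr}), the copy started from $(\pi_p,\Zd)$ also survives, so $\lim_{t\to\infty}\P_p[\,C_t^{\pi_p,\Zd}\neq\emptyset\,]>0$ (the event is decreasing in $t$). Attractiveness also yields that the law of $(B_t,C_t^{\pi_p,\Zd})$ converges weakly to $\bar\nu_p$, so this limit equals $\bar\nu_p(\{C\neq\emptyset\})$. Since $\{B_t\}$ evolves autonomously and is started in stationarity, the $B$-marginal of $\bar\nu_p$ is $\pi_p$; therefore $\bar\nu_p\neq\pi_p\times\delta_\emptyset$.

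\textbf{Proof of \eqref{dualityId}.}
Condition on a realization $B=\{B_s\}_{s\in[0,t]}$ of the background. Given $B$, $\{C_s\}_{s\in[0,t]}$ is a time-inhomogeneous contact process admitting the standard graphical representation: for every ordered pair of neighbours $(x,y)$, an independent Poisson process of rate $1$ of infection arrows, and, at every site $x$, an independent Poisson process of kill marks of time-varying intensity $\delta_{B_s(x)}$. Reversing time in this graphical picture yields the conditional identity
\[
\P\bigl[\,C_t^{A}\cap A'\neq\emptyset\mid B\,\bigr]=\P\bigl[\,\tilde C_t^{A'}\cap A\neq\emptyset\mid B\,\bigr],
\]
where $\tilde C$ is the contact process driven by the reversed background $\tilde B_s(\cdot):=B_{t-s}(\cdot)$. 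Each coordinate of $B$ is an independent two-state Markov chain, reversible with respect to its stationary Bernoulli$(p)$ distribution, so when $B_0\sim\pi_p$ the path $\{\tilde B_s\}_{s\in[0,t]}$ has the same law as $\{B_s\}_{s\in[0,t]}$. Integrating the conditional identity over this common law yields \eqref{dualityId}.

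\textbf{Conclusion and main obstacle.}
Taking $A=\Zd$ and $A'=\{0\}$ in \eqref{dualityId} gives
\[
\P_p\bigl[\,0\in C_t^{\pi_p,\Zd}\,\bigr]=\P_p\bigl[\,C_t^{\pi_p,\{0\}}\neq\emptyset\,\bigr].
\]
As $t\to\infty$, the left-hand side converges to $\bar\nu_p(\{0\in C\})$ by attractiveness, while the right-hand side, being decreasing in $t$, converges to $\P_p[\,C_t^{\pi_p,\{0\}}\neq\emptyset\,\forall t\,]$. By translation invariance of $\bar\nu_p$ and countable subadditivity, $\bar\nu_p(\{0\in C\})>0$ iff $\bar\nu_p(\{C\neq\emptyset\})>0$ iff $\bar\nu_p\neq\pi_p\times\delta_\emptyset$; by Theorem~\ref{paper1:thm1}, the right-hand side is positive iff $\{C_t\}$ survives at $p$. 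This gives the remaining implication, and $p_c=p_c'$ follows. The main obstacle is the careful setup of the graphical representation in the presence of the time-inhomogeneous killing rates so that the conditional self-duality holds as stated; once this is in place, the reversibility of the independent two-state background chains is immediate and the rest of the argument is routine.
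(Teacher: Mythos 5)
Your overall route is the same as the paper's: establish the self-duality identity $\P_p[\,C_t^{\pi_p,A}\cap A'\neq\emptyset\,]=\P_p[\,C_t^{\pi_p,A'}\cap A\neq\emptyset\,]$ when the background starts from its stationary measure $\pi_p$, apply it with $A=\Zd$, $A'=\{0\}$, and let $t\to\infty$. Your justification of the duality (quenched graphical duality for the time-inhomogeneous contact process, plus reversibility of the stationary two-state background chains) is correct and in fact more explicit than the paper, which merely asserts the identity.

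There is, however, one genuine gap: the claim that $\P_p[\,0\in C_t^{\pi_p,\Zd}\,]\to\bar{\nu}_p(\{0\in C\})$ ``by attractiveness.'' Attractiveness together with the stationarity of $\pi_p$ for the background only gives that $(\pi_p\times\delta_{\Zd})S_p(t)$ decreases to some stationary limit $\nu_p'$ with $\nu_p'\leq\bar{\nu}_p$ (recall $\bar{\nu}_p$ is by definition the limit from $\delta_{\Zd}\times\delta_{\Zd}$, which dominates $\pi_p\times\delta_{\Zd}$). This inequality points the wrong way for the hard direction: from $\bar{\nu}_p\neq\pi_p\times\delta_{\emptyset}$ you need $\nu_p'(\{0\in C\})>0$, which does not follow from $\nu_p'\leq\bar{\nu}_p$. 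You must also show $\bar{\nu}_p\leq\nu_p'$; the paper does this in Lemma~\ref{paper1:upper_conv1} by noting that every stationary measure --- in particular $\bar{\nu}_p$ --- has first marginal $\pi_p$, hence is dominated by $\pi_p\times\delta_{\Zd}$, hence by $(\pi_p\times\delta_{\Zd})S_p(t)$ for all $t$, hence by $\nu_p'$. Separately, your stand-alone ``easy direction'' paragraph is flawed as written: $\{C\neq\emptyset\}$ is not a cylinder event, and the portmanteau inequality for this open set reads $\liminf_t\P_p[\,C_t^{\pi_p,\Zd}\neq\emptyset\,]\geq\bar{\nu}_p(\{C\neq\emptyset\})$, which does not let you deduce $\bar{\nu}_p(\{C\neq\emptyset\})>0$ from positivity of the limit. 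This is harmless only because your concluding duality argument, which works with the one-coordinate cylinder event $\{0\in C\}$, already delivers that direction once the identification $\nu_p'=\bar{\nu}_p$ is supplied.
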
 
Our final result is a generalization of Theorem~\ref{paper1:GB}.
\begin{theorem}\label{paper1:thm3}
If $\{C_t\}$ survives at $p>0$, then there exists $\delta>0$ so that it survives at $p-\delta$. In particular, if $p_c \in (0,1]$, then the critical contact process in a randomly evolving environment dies out.
\end{theorem}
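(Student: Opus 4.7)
The plan is to adapt the Bezuidenhout--Grimmett block construction used for Theorem~\ref{paper1:GB} to CPREE and to exploit the fact that the resulting finite space--time event depends continuously on $p$. First I would set up a graphical representation of CPREE on a single probability space in which nearby values of $p$ are realized simultaneously: at each site $x$ place independent Poisson processes for background flips $0\to 1$ at rate $\gamma p$ and $1\to 0$ at rate $\gamma(1-p)$, for recovery marks at rates $\delta_0$ and $\delta_1$ (with the appropriate one active depending on the current background), and for infection arrows at rate $1$ from each neighbour. Only the flip rates depend on $p$, so two nearby values of $p$ can be coupled by adding or removing a Poisson set of rate $O(\gamma|\Delta p|)$ of flip marks.

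Assume that $\{C_t\}$ survives at $p>0$. By Theorem~\ref{paper1:thm2}, $\bar{\nu}_p\neq\pi_p\times\delta_\emptyset$, and by Theorem~\ref{paper1:thm1} the infection started from $(\pi_p,\{0\})$ survives forever with positive probability. From here I would run the Bezuidenhout--Grimmett machinery to establish a finite block condition: for every $\varepsilon>0$ there exist integers $L,T,N$ and a ``seed'' shape $S\subset[-L,L]^d$ with $|S|=N$ such that, starting from infections $S$ and background drawn from $\pi_p$, with probability at least $1-\varepsilon$ the process at time $T$ contains translates of $S$ inside each of the $2d$ space-like neighbouring boxes, the event being measurable with respect to the graphical data inside $[-3L,3L]^d\times[0,T]$. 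Attractiveness (Proposition~\ref{paper1:attr}) is used to seed fresh blocks with background $\pi_p$ rather than with the actual residual background, and to justify the usual monotone comparisons in the restart step.

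The remaining steps are then essentially automatic. The block event above is measurable with respect to finitely many Poisson marks whose rates are linear in $p$, together with an independent Bernoulli$(p)$ background on the finite set $[-3L,3L]^d$, so a standard coupling yields total-variation distance $O(\delta L^d(1+T))$ between its laws at $p$ and at $p-\delta$. Hence the block condition still holds at $p-\delta$ with probability at least $1-2\varepsilon$ once $\delta$ is small. For $\varepsilon$ small enough this dominates a supercritical $1$-dependent oriented site percolation on $\Z^d$ (the one-step dependence coming from the fact that disjoint blocks use disjoint graphical data and disjoint Bernoulli seeds), which in turn forces $\{C_t\}$ to survive at $p-\delta$. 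Therefore the set of $p\in[0,1]$ at which CPREE survives is open, so $p_c$ (its infimum) lies outside it and the critical CPREE dies out.

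The main obstacle is the block condition itself. The classical BG proof for the contact process uses repeated restarts in which each new block begins from a fresh single infection in a fresh region, but in CPREE the background in a region one has just used is correlated with the infection pattern and is \emph{not} distributed as $\pi_p$. Handling this requires using the exponential relaxation of the two-state background chain to $\pi_p$ (mixing time $O(1/\gamma)$) together with attractiveness to replace the true residual background by a stochastically smaller independent sample from $\pi_p$, at a probability cost that can be made arbitrarily small by taking $T$ large; after this reduction the restart argument proceeds as in \cite{GrimBez} or \cite{Liggett99}.
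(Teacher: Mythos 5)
Your overall architecture matches the paper's: run the Bezuidenhout--Grimmett machinery to get a finite space--time block condition, observe that the block event depends only on the graphical representation in a finite box and hence is continuous in $p$, and compare with a supercritical $1$-dependent oriented percolation at $p-\delta$. You have also correctly identified the one genuinely new obstacle, namely that at each restart the residual background is correlated with the infection history and is not distributed as $\pi_p$. But your proposed fix for that obstacle does not work as stated. You want to ``use the exponential relaxation of the background to $\pi_p$ together with attractiveness to replace the true residual background by a stochastically smaller independent sample from $\pi_p$.'' Relaxation gives convergence in distribution to $\pi_p$, not stochastic domination of $\pi_p$: there is no time $s$ after which an arbitrary (conditioned) background configuration is guaranteed to dominate an independent $\pi_p$ sample. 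The correct monotone statement, which the paper uses exactly once, is the weaker $\delta_\emptyset T(s)\geq \pi_{p-\epsilon}$ for $s$ large --- you only recover domination of $\pi_{p-\epsilon}$, and only starting from the all-zeros background. Moreover, even if you patch this, a block event defined relative to the \emph{true} background at the restart time is not measurable with respect to the graphical data after that time, which breaks the independence/one-dependence you need for the percolation comparison.

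The paper's resolution is structurally different at this point and worth internalizing: every block event is defined in terms of $\emptyset$-active paths, i.e.\ paths that would be open if the background were \emph{restarted from all zeros} at the block's starting time (Definition~\ref{paper1:paths}). This has two payoffs simultaneously. First, since the all-zeros restart is the pointwise worst case and the process is attractive (Proposition~\ref{paper1:attr}), the true process dominates the restarted one, so success of the $\emptyset$-block event implies success of the real infection --- no appeal to the residual background's law is needed at all. Second, the $\emptyset$-restarted background and the $\emptyset$-active paths are measurable with respect to the Poisson marks \emph{after} the restart time, hence independent of the past, which is what makes the restart and one-dependence arguments go through cleanly. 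The relaxation-plus-domination idea you describe is used only once, in Lemma~\ref{paper1:lemma1} and its auxiliary continuity lemma, to pass from survival with background $\pi_p$ to the statement that survival from a large box with all-zeros background is near-certain; after that, the all-zeros restart carries the whole construction. Your block condition, stated with background drawn from $\pi_p$, is the wrong object to iterate; you should prove it with background $\emptyset$ instead.
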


The rest of the paper is organized as follows. In Section 2, we provide some preliminaries, in Section 3, we prove Theorems~\ref{paper1:thm1} and \ref{paper1:thm2} and in Section 4, we prove Theorem~\ref{paper1:thm3}. 

\section{Some preliminaries}
\label{paper1:graphrepr}

In this section we will present the basic construction of  the CPREE via a graphical representation that is suitable for our situation. We will also prove the elementary fact that the CPREE is an attractive process. However, we will start off with some notation and basic definitions. When the initial distribution of the process is $\rho$, we will denote the distribution at time $t$ by $\rho S_p(t)$, suppressing $\gamma$, $\delta_0$ and $\delta_1$ in the notation. (Of course, $\rho$ is a probability measure on $\{0,1\}^{\Zd}\times\{0,1\}^{\Zd}$.) When $\rho$ is a product measure, $\rho=\mu\times\nu$, we will denote the process by $\{(B_t^{\mu}, C_t^{\mu,\nu})\}_{t\geq 0}$. In the case where $\mu=\delta_\beta$ and $\nu=\delta_\eta$ for some $\beta$, $\eta \in \{0,1\}^{\Zd}$, we write $\{(B_t^{\beta}, C_t^{\beta,\eta})\}_{t\geq 0}$. To simplify notation, we freely interchange between talking about elements in $\{0,1\}^{\Zd}$ and subsets of $\Zd$. For $\eta, \eta^\prime \in \{0,1\}^{\Zd}$ we write $\eta \leq \eta^\prime$ if $\eta(x)\leq \eta^\prime(x)$ $\forall x \in \Zd$. Furthermore, for $(\beta,\eta),(\beta^\prime,\eta^\prime) \in \{0,1\}^{\Zd}\times\{0,1\}^{\Zd}$ we write $(\beta,\eta)\leq (\beta^\prime,\eta^\prime)$ if both $\beta \leq \beta^\prime$ and $\eta\leq\eta^\prime$. These relations induce the concept of increasing function in the usual way.
\begin{defin}
We say that a function $f$ on $\{0,1\}^{\Zd}$ $($or $\{0,1\}^{\Zd}\times\{0,1\}^{\Zd})$ is increasing if $f(\eta)\leq f(\eta^\prime)$ $(f(\beta,\eta)\leq f(\beta^\prime,\eta^\prime))$ whenever $\eta\leq\eta^\prime$ $((\beta,\eta)\leq (\beta^\prime,\eta^\prime))$. 
\end{defin}
In our analysis we make extensive use of the concept of \emph{stochastic domination}.
\begin{defin}\label{paper1:dom}
Given two probability measures $\mu_1$ and $\mu_2$ on $\{0,1\}^{\Zd}$, we say that $\mu_1$ is stochastically dominated by $\mu_2$ if $\mu_1(f)\leq \mu_2(f)$ $\forall$ increasing continuous functions $f$ and we denote this by $\mu_1\leq\mu_2$. If $\mu_i$ is the distribution of $X_i$, $i=1,2$, we also write $X_1\leq_{D} X_2$. 
\end{defin} 
It is well known (see for example \cite{Liggett85}) that this is equivalent to the existence of random variables $X_1,X_2$ on a common probability space such that $X_1\sim \mu_1$, $X_2\sim\mu_2$ and $X_1 \leq X_2$ a.s. (The $\sim$ here means distributed according to.) Also, since we can identify $\{0,1\}^{\Zd}\times\{0,1\}^{\Zd}$ with $\{0,1\}^{\Zd\times\{0,1\}}$ we have a similar result for measures on $\{0,1\}^{\Zd}\times\{0,1\}^{\Zd}$. (Of course, stochastic domination makes sense on any space of the form $\{0,1\}^S$ where $S$ is countable.) 

Now, we turn to the graphical representation from which our process will be defined. Let $\gamma, \delta_0, \delta_1>0$ with $ \delta_1 \leq\delta_0 $ and $p \in [0,1]$ be given parameters. Let $\{e_j\}_{j=1}^d$ denote the standard basis on $\Zd$, i.e.$\dptn$ for $i$, $j\in \{\,1,\ldots,d\,\}$
\[
e_j(i)=
\begin{cases}
1 \quad \text{if $i=j$} \\
0 \quad \text{if $i\neq j$}. 
\end{cases}
\]
Define the following stochastic elements on a common probability space in such a way that they are independent:
\begin{itemize}
\item[--] $M^{b,0\to 1}=\{M^{b,0\to 1 }_t\}_{t\geq 0}$, a process with state space $\N^{\Zd}$ where each marginal independently evolves as a Poisson process with intensity $\gamma p$. (This process will correspond to the 0 to 1 flips in the background process, see below.)
\item[--] $M^{b,1\to 0}=\{M^{b,1\to 0}_t\}_{t\geq 0}$, a process with state space $\N^{\Zd}$ where each marginal independently evolves as a Poisson process with intensity $\gamma(1-p)$. (This process will correspond to the 1 to 0 flips in the background process, see below.)
\item[--] $N^{\delta_1}=\{N_t^{\delta_1}\}_{t\geq 0}$, a process with state space $\N^{\Zd}$ where each marginal independently evolves as a Poisson process with intensity $\delta_1$.
\item[--] $N^{\delta_0-\delta_1}=\{N_t^{\delta_0-\delta_1}\}_{t\geq 0}$, a process with state space $\N^{\Zd}$ where each marginal independently evolves as a Poisson process with intensity $\delta_0-\delta_1$.
\item[--] $\vec{N}^j=\{\vec{N}_t^j\}_{t \geq 0}$, $j\in \{\,\pm e_1,\ldots,\pm e_d \,\}$, independent processes with state space $\N^{\Zd}$ where each marginal independently evolves as a Poisson process with intensity 1. (We think of the points in $\vec{N}^j(x)$ as being arrows from $x$ to $x+e_j$ and will correspond to the potential spread of infection from  $x$ to $x+e_j$.)
\end{itemize}
For $s\geq 0$ and $\beta\in\{0,1\}^{\Zd}$, we will begin to define a process $B^{\beta,s}=\{B_t^{\beta,s}\}_{t\geq s}$ where for each $x\in\Zd$, $B^{\beta,s}(x)$ is a function of the arrivals of $M^{b, 0\to 1}(x)$ and $M^{b,1\to 0}(x)$ in $[s,\infty)$. Assume for example that $\beta(x)=0$; the case when $\beta(x)=1$ can be handled in a similar fashion. We then define
\[
\begin{array}{lll}
B_t^{\beta,s}(x)&=0, \quad &  \quad s\leq t<T_1 \\
B_t^{\beta,s}(x)&=1, \quad &  \quad  T_1\leq t<T_2\\
B_t^{\beta,s}(x)&=0, \quad &  \quad  T_2\leq t<T_3 \\
B_t^{\beta,s}(x)&=1, \quad &  \quad  T_3\leq t<T_4 \\
&\vdots
\end{array}
\]
where $T_1$ is the first arrival time of $M^{b,0\to 1}(x)$ after $s$, $T_2$ is the first arrival time of $M^{b,1\to 0}(x)$  after $T_1$, $T_3$ is the first arrival time of $M^{b,0\to 1}(x)$  after $T_2$, $T_4$ is the first arrival time of $M^{b,1\to 0}(x)$  after $T_3$ and so forth. In words, the points in $M^{b,0\to 1}$ are the times at which the background process switches to $1$ (had it been in state $0$) and similarily for $M^{b,1\to 0}$. Note importantly, we have all the processes $B^{\beta,s}$, as $\beta$ and $s$ vary, defined on the same probability space. 

Given $B^{\beta,s}$, $N^{\delta_1}$ and $N^{\delta_0-\delta_1}$, define  $X^{\beta,s}=\{X_t^{\beta,s}\}_{t\geq s}$, a point process on $\Zd\times [s,\infty)$, in the following way: 
\[
\begin{split}
X^{\beta,s}&=\{\,(x,t)\in\Zd\times[s,\infty):\,(x,t)\in N^{\delta_1}\text{ or } \\
&\qquad (x,t)\in N^{\delta_0-\delta_1} \text{ and } B_t^{\beta,s}(x)=0 \,\}
\end{split}
\]
In words, for each site $x$, we choose points in $[s,\infty)$ from $N^{\delta_1}(x)$  when the background process is in state $1$ and from the union of $N^{\delta_1}(x)$ and $N^{\delta_0-\delta_1}(x)$ when the background process is in state $0$.
\begin{defin}\label{paper1:paths}
Given space-time points $(x,s)$ and $(y,t)$ with $t>s$ and $\beta\in\{0,1\}^{\Zd}$, we say that there is a $\beta$-active path from $(x,s)$ to $(y,t)$ if there is a sequence of times $s=s_0 < s_1 < \ldots < s_m < s_{m+1}=t$ and space points $x=x_0$, $x_1,\ldots,$ $x_m=y$ so that for $i=1,\ldots,m$, there is an arrow from $x_{i-1}$ to $x_i$ at time $s_i$ and there are no points in $X^{\beta,s}$ on the vertical segments $\{x_i\}\times (s_{i},s_{i+1})$, $i=0,\ldots,m$.
\end{defin}
\noindent\Remark Note importantly, that both $B^{\beta,s}$ and the existence of a $\beta$-active path from $(x,s)$ to $(y,t)$ are measurable with respect to the Poisson processes after time $s$ and hence are independent of everything in the Poisson processes up to that time. The reason that these objects are introduced for $s>0$ is that they are useful objects to which the original process can be usefully compared as will be done in the proof of Theorem~\ref{paper1:thm3}.

To define the process $\{(B_t^\beta,C_t^{\beta,\eta})\}_{t\geq 0}$ for a given initial configuration $(\beta,\eta) \in \{0,1\}^{\Zd}\times \{0,1\}^{\Zd}$, we let $B_t^\beta=B_t^{\beta,0}$ and
\[
\begin{split}
C_t^{\beta,\eta}&=\{\,y\in\Zd:\:\text{for some $x\in\Zd$ with $\eta(x)=1$, }\\ 
&\qquad \text{there is a $\beta$-active path from $(x,0)$ to $(y,t)$}\,\}.
\end{split}
\]
This is our formal definition of the CPREE. Note as $\beta$ and $\eta$ vary, we have all the processes $\{(B_t^\beta,C_t^{\beta,\eta})\}_{t\geq 0}$ defined on the same probability space. 

Having defined $\{(B_t,C_t)\}_{t \geq 0}$ with initial configuration $(\beta,\eta)$, it is a simple matter to extend the definition to an arbitrary initial distribution $\rho$. Just add to our probability space, independently of all the random variables already defined, two random variables on $\{0,1\}^{\Zd}$ with joint distribution $\rho$. We will denote the probability measure governing all these variables by $\P_p$, suppressing $\gamma$, $\delta_0$ and $\delta_1$ in the notation. 

The first easy fact about the CPREE we will show is that it is an attractive process. 
\begin{prop}
\label{paper1:attr}
$(B_t,C_t)$ satisfies the attractivity condition:
\begin{equation}\label{paper1:attrcond}
\rho \leq \sigma \quad \implies \quad \rho S_p(t)\leq \sigma S_p(t) \quad \forall t>0.
\end{equation}
\end{prop}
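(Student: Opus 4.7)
The plan is a standard coupling argument via the graphical representation already set up. By the coupling characterization of stochastic domination (cited just after Definition~\ref{paper1:dom}), it suffices to show the following deterministic monotonicity on the common probability space: for every pair $(\beta,\eta),(\beta',\eta')\in\{0,1\}^{\Zd}\times\{0,1\}^{\Zd}$ with $(\beta,\eta)\leq (\beta',\eta')$,
\[
(B_t^{\beta},C_t^{\beta,\eta})\leq (B_t^{\beta'},C_t^{\beta',\eta'})\quad\text{for all }t\geq 0,
\]
where both sides are built from the same Poisson processes $M^{b,0\to 1}$, $M^{b,1\to 0}$, $N^{\delta_1}$, $N^{\delta_0-\delta_1}$ and $\{\vec N^{j}\}$. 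Once this is established, given $\rho\leq\sigma$ one couples the initial data so that $(\beta,\eta)\sim\rho$, $(\beta',\eta')\sim\sigma$ and $(\beta,\eta)\leq(\beta',\eta')$ a.s., independently of the graphical Poisson processes; the above monotonicity then yields a coupling of $\rho S_p(t)$ and $\sigma S_p(t)$ witnessing $\rho S_p(t)\leq \sigma S_p(t)$.

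The first step is to prove $B_t^{\beta}\leq B_t^{\beta'}$ for all $t\geq 0$ when $\beta\leq\beta'$. Since the background coordinates evolve independently at different sites, this reduces to a one-site claim. The only nontrivial case is $\beta(x)=0$, $\beta'(x)=1$; here $B^{\beta,0}(x)$ starts at $0$ and uses the next $M^{b,0\to 1}(x)$ arrival to flip up, while $B^{\beta',0}(x)$ starts at $1$ and uses the next $M^{b,1\to 0}(x)$ arrival to flip down. A straightforward case analysis on which of these two arrivals comes first shows that the two trajectories couple (become equal) at that first arrival time, and up to that time $B^{\beta,0}(x)\leq B^{\beta',0}(x)$ trivially. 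After coupling, they use the same Poisson processes from the same common state and therefore agree forever.

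The second step handles the $C$-coordinate. Since $B_t^{\beta}\leq B_t^{\beta'}$ pointwise, the set of times at which $B^{\beta,0}(x)=0$ contains the set of times at which $B^{\beta',0}(x)=0$. Reading off the construction of $X^{\beta,0}$, this gives the key inclusion
\[
X^{\beta,0}\supseteq X^{\beta',0}.
\]
Consequently, any $\beta$-active path (which, by Definition~\ref{paper1:paths}, must avoid the vertical recovery marks in $X^{\beta,0}$) automatically avoids the smaller set $X^{\beta',0}$, and hence is also a $\beta'$-active path (the arrow structure is common to both). Combining this with $\eta\leq\eta'$: any $y\in C_t^{\beta,\eta}$ is reached by a $\beta$-active path from some $(x,0)$ with $\eta(x)=1$; then $\eta'(x)=1$ as well, and that same path is $\beta'$-active, so $y\in C_t^{\beta',\eta'}$. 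This gives $C_t^{\beta,\eta}\leq C_t^{\beta',\eta'}$, completing the proof.

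The only point that required genuine checking is the monotonicity of the background process in the initial condition, because the graphical rules for $M^{b,0\to 1}$ and $M^{b,1\to 0}$ are state-dependent rather than flip-on-every-arrival; once the coalescence argument above is in place, the rest is essentially a set-theoretic inclusion for the recovery marks and a path-existence argument identical to the one used for the ordinary contact process.
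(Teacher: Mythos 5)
Your proof is correct and follows essentially the same route as the paper: the paper's argument is exactly this graphical-representation coupling, asserting that $B_t^{\beta_1}\leq B_t^{\beta_2}$ and $C_t^{\beta_1,\eta_1}\leq C_t^{\beta_2,\eta_2}$ are immediate from the construction. You have simply filled in the details the paper leaves implicit, namely the one-site coalescence argument for the background monotonicity and the inclusion $X^{\beta,0}\supseteq X^{\beta',0}$ of recovery marks, both of which are accurate.
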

\begin{proof}
It is standard that \eqref{paper1:attrcond} is equivalent to $(\delta_\beta\times\delta_\eta)S_p(t)$ being stochastically increasing in $(\beta,\eta)$ for all $t\geq 0$. However, it is immediate from the construction that if $\beta_1\leq\beta_2$ and $\eta_1\leq\eta_2$, then for all $t\geq 0$
\[
B_t^{\beta_1}\leq B_t^{\beta_2}
\]
and
\[
C_t^{\beta_1,\eta_1}\leq C_t^{\beta_2,\eta_2}.
\]
This gives the stochastic domination (with an explicit coupling).
\end{proof} 

\section{Proofs of Theorems \ref{paper1:thm1} and \ref{paper1:thm2}}

Recall, given $\gamma$, $\delta_0$, $\delta_1 >0$ with $\delta_1 \leq \delta_0$ and $q \in [0,1]$ we have defined
\[
p_c(q,A):=\inf\left\{ p:\, \P_p [\,C_t^{\pi_q,A}\neq \emptyset\, \forall t > 0\,] > 0 \right\} 
\]
where $A \subseteq \Zd$, $\vert A \vert < \infty$, and $\pi_q$ denotes product measure with density $q$.

\medskip

\noindent\begin{proofoft}{\ref{paper1:thm1}}
We will prove the statements:
\begin{itemize}
\item[--] For all $A\subseteq\Zd$ with $|A|<\infty$ and $p$, $q\in [0,1]$,
\begin{equation}\label{paper1:stat1}
\P_p[\,C_t^{\pi_q,A} \neq \emptyset\, \forall t > 0\,]>0 \quad \iff \quad \P_p[\,C_t^{\pi_q,\{0\}} \neq \emptyset\, \forall t > 0\,]>0.
\end{equation}
\item[--] For all $p\in [0,1]$, 
\begin{equation}\label{paper1:stat2}
\P_p[\,C_t^{\emptyset,\{0\}} \neq \emptyset\, \forall t > 0\,]>0 \quad \iff \quad \P_p[\,C_t^{\Zd,\{0\}} \neq \emptyset\, \forall t > 0\,]>0.
\end{equation}
\end{itemize}
Combining these two will yield the statement in Theorem~\ref{paper1:thm1}.
For \eqref{paper1:stat1}, the left implication follows from translation invariance and the right implication follows easily from the additivity property of the process meaning
\[
C_t^{\beta,A\cup B}=C_t^{\beta,A}\cup C_t^{\beta, B} \quad \forall A,B \subseteq \Zd, \:\forall \beta\in\{0,1\}^{\Zd}.
\]
To prove \eqref{paper1:stat2}, observe that the right implication is immediate from Proposition~\ref{paper1:attr} and so we assume $\P_p [\,C_t^{\Zd,\{0\}}\neq \emptyset\, \forall t > 0\,]>0$. Define
\[
\varphi_t(x)=1_{\{B_t^{\emptyset}(x)=B_t^{\Zd}(x)\}} \quad x\in \Zd,\: t\geq 0.
\]
(Recall this is well defined since $\{B_t^{\emptyset}\}_{t\geq 0}$ and $\{B_t^{\Zd}\}_{t\geq 0}$ are defined on the same probability space.) Note that $\varphi_t$ has the property that for each site independently, after an exponentially distributed time with mean $\frac{1}{\gamma}$, the process flips to one and stays there. Therefore we have $\P_p[\,\varphi_t(x)=1\,]=1-e^{-\gamma t}$. For $A\subseteq \Zd$, define $\{\tilde{C}_t^A\}_{t\geq 0}$ from the graphical representation in the same way as $\{C_t^{\,\cdot\,,A}\}_{t\geq 0}$ except that all recoveries are ignored. This is what is usually called the Richardson model, see Durrett \cite{Durrett88}.
\begin{lemma}\label{paper1:BN}
$\P_p[\,\tilde{C}_t^{\{0\}}\subseteq \varphi_t \:, \forall t\geq n\,]\to 1$ as $n\to\infty$. 
\end{lemma}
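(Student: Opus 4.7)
The strategy uses the monotonicity in $t$ of both $\tilde{C}_t^{\{0\}}$ and $\varphi_t$, together with (i) a linear-speed upper bound for the Richardson model, and (ii) the elementary fact that $\varphi_t(x)$ reaches $1$ on an $\mathrm{Exp}(\gamma)$ timescale at each site.

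For (ii), since $B_0^{\emptyset}(x)=0\ne 1=B_0^{\Zd}(x)$, the first arrival at $x$ of either $M^{b,0\to 1}$ or $M^{b,1\to 0}$ forces the two backgrounds to agree (exactly one of them is in a state that accepts that flip); thereafter they are driven by the same Poisson processes from identical values and remain equal forever. So $\sigma_x := \inf\{t:\varphi_t(x)=1\}$ is $\mathrm{Exp}(\gamma)$, confirming $\P_p[\varphi_t(x)=0]=e^{-\gamma t}$, and in particular $t\mapsto \varphi_t(x)$ is nondecreasing.

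For (i), I would invoke the standard path-counting estimate: there exist constants $K=K(d)$ and $c,C>0$ with
$$
\P_p\bigl[\tilde{C}_t^{\{0\}} \not\subseteq [-Kt,Kt]^d\bigr] \le C e^{-ct}, \qquad t \ge 0.
$$
Indeed, any escape from this box forces some walk of length $\ell>Kt$ from $0$ to be fully activated in $[0,t]$; for a fixed walk of length $\ell$ the first-passage time is $\mathrm{Gamma}(\ell,1)$, so the activation probability is $\P[\mathrm{Poisson}(t)\ge\ell]\le (et/\ell)^\ell$, and summing over the at most $(2d)^\ell$ walks of length $\ell$ gives $\sum_{\ell>Kt}(2det/\ell)^\ell \le Ce^{-ct}$ once $K>2de$ (cf.\ Durrett \cite{Durrett88}).

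Now I would discretize time. For each integer $m\ge 0$ define
$$
G_m := \bigl\{\tilde{C}_{m+1}^{\{0\}} \subseteq [-K(m+1),K(m+1)]^d\bigr\} \cap \bigl\{\varphi_m(y)=1 \text{ for all } y\in [-K(m+1),K(m+1)]^d\bigr\}.
$$
A union bound gives
$$
\P_p[G_m^c] \le C e^{-c(m+1)} + (2K(m+1)+1)^d e^{-\gamma m},
$$
which is summable in $m$, so $\P_p\bigl[\bigcap_{m\ge n}G_m\bigr]\to 1$ as $n\to\infty$. On this intersection, any $t\ge n$ satisfies $m:=\lfloor t\rfloor\ge n$, whence $\tilde{C}_t^{\{0\}}\subseteq \tilde{C}_{m+1}^{\{0\}}\subseteq [-K(m+1),K(m+1)]^d$, while for every $y$ in that box $\varphi_t(y)\ge\varphi_m(y)=1$; thus $\tilde{C}_t^{\{0\}}\subseteq\varphi_t$. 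The only non-routine input is the Richardson speed bound; the rest is monotone interpolation together with a summable union bound.
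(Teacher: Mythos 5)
Your proof is correct and follows essentially the same route as the paper's: both combine an exponential growth bound for the Richardson model with the fact that the two backgrounds couple at each site after an $\mathrm{Exp}(\gamma)$ time, then apply a summable union bound (Borel--Cantelli) along a discrete sequence of times and use monotonicity of $\tilde{C}_t^{\{0\}}$ and $\varphi_t$ to interpolate. The only cosmetic difference is that the paper contains $\tilde{C}_{c_1(n+1)}^{\{0\}}$ in a box of side $\sim n^2$ so as to quote Durrett's estimate directly, whereas you use a linear box with the speed bound derived by path counting; the two are interchangeable here.
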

\begin{proof}
Let $I_n=\{-n^2, \ldots,n^2\}^d$ and for $x\in \Zd$ define
\[
t(x)=\inf\{\,t:\,x\in \tilde{C}_t^{\{0\}}\,\}.
\]
From \cite[p.~16]{Durrett88}, we get that there are constants $c_1$,$c_2$,$c_3 \in (0,\infty)$ such that
\[
\P_p[\,t(x) < c_1\vert x \vert_\infty \,] \leq c_2 e^{-c_3\vert x\vert_\infty},
\]
where $\vert\cdot\vert_\infty$ is the $L^{\infty}$ norm. This easily gives us the estimate
\[
\P_p[\,\tilde{C}_{c_1(n+1)}^{\{0\}}\nsubseteq I_n\,]\leq P(n) e^{-c_3 n},
\]
where $P(n)$ is a polynomial in $n$, and from the Borel Cantelli lemma we can conclude 
\begin{equation}\label{paper1:one}
\P_p[\,\exists N \geq 1 \:\rm{such}\:\rm{that}\: \tilde{C}_{c_1(n+1)}^{\{0\}}\subseteq I_n\:, \forall n\geq N\, ]=1.
\end{equation}
Furthermore, independence gives
\[
\P_p[\,I_n\subseteq\varphi_{c_1 n}\,]=(1-e^{-\gamma c_1 n})^{(2n^2+1)^d}. 
\]
and since  
\[
\sum_{n=1}^\infty 1-(1-e^{-\gamma c_1 n})^{(2n^2+1)^d}<\infty,
\]
the Borel Cantelli lemma again yields
\begin{equation}\label{paper1:two}
\P_p[\,\exists N \geq 1 \:\rm{such}\:\rm{that}\: I_n\subseteq\varphi_{c_1 n}\:, \forall n\geq N\,]=1.
\end{equation}
Combining \eqref{paper1:one} and \eqref{paper1:two}, we obtain 
\[
\P_p[\,\exists N \geq 1 \:\rm{such}\:\rm{that}\: \tilde{C}_t^{\{0\}}\subseteq \varphi_t\:,\forall t\geq N\,]=1,
\]
as desired.
\end{proof} 

Since $C_t^{\Zd,\{0\}}\subseteq \tilde{C}_t^{\{0\}} \: \forall t\geq 0$, the claim tells us that, with probability one, after some time and thereafter, the two background processes influence $C_t^{\emptyset,\{0\}}$ and $C_t^{\Zd,\{0\}}$ in exactly the same way. Next, countable additivity gives us that for some $n\geq 1$ we have
\[
\P_p[\, \tilde{C}_t^{\{0\}}\subseteq \varphi_t\:\forall t\geq n\,,\, C_t^{\Zd,\{0\}}\neq\emptyset \: \forall t>0 \,] > 0
\]
and then that for some $m$ (depending on $n$) 
\[
\P_p[\, \tilde{C}_t^{\{0\}}\subseteq \varphi_t\:\forall t\geq n\,,\, \tilde{C}_t^{\{0\}} \subseteq [-m,m]^d\: \forall t \in [0,n], \,C_t^{\Zd,\{0\}}\neq\emptyset \: \forall t>0 \,] > 0.
\]
Denote the previous event by $A$ and define the random set
\[
U=\{\,(x,t)\in [-m,m]^d \times [0,n]: \: B_t^{\Zd}(x)=1 \, \}
\]
and let 
\[
B=\{\,\text{no arrivals in } N^{\delta_0-\delta_1} \text{ during }  U \,\}.
\]
It is clear that
\[
A \cap B \subseteq \{ \, C_t^{\emptyset,\{0\}}\neq \emptyset \: \forall t > 0  \, \}
\]
and so it remains to show that 
\[
\P_p[\,A \cap B \, ]>0.
\]
However, if we condition on $A$ and $U$, then we will not yield any information about the $N^{\delta_0-\delta_1}$ process on $U$ and so 
\[
\P_p[\,B\,| A,U\,]=e^{-(\delta_0-\delta_1)\mathcal L(U)}
\]
where $\mathcal L(U)$ is the ``length'' of $U$. This easily gives
\[
\P_p[\,B |\, A \,]>0
\]
and the proof is complete.
\end{proofoft} 

\noindent\Remark The same argument shows that strong survival does not depend on the initial distribution of the background process in the sense that
\[
\P_p[\,0\in C_t^{\emptyset,\{0\}} \text{ i.o.}\,]>0 \quad \iff \quad \P_p[\,0\in C_t^{\Zd,\{0\}}\text{ i.o.} \,]>0.
\]
This answers another question in \cite{Broman}.

Recall the definition of $p_c^\prime$ from the introduction:
\[
p_c^\prime:=\inf\{\, p: \, \bar{\nu}_p \neq \pi_p\times\delta_{\emptyset}\,\}.
\]
Here $\bar{\nu}_p=\lim_{t\to\infty}(\delta_{\Zd}\times\delta_{\Zd})S_p(t)$. (The limit exists due to Proposition~\ref{paper1:attr}.) To prove Theorem~\ref{paper1:thm2} we will use the next Lemma.
\begin{lemma}
\label{paper1:upper_conv1}
Given $p,q \in (0,1)$ with $q \geq p$ we have
\[
\lim_{t\to\infty}(\pi_q\times\delta_{\Zd})S_p(t)=\bar{\nu}_p.
\]
\end{lemma}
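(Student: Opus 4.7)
The plan is to sandwich $(\pi_q\times\delta_{\Zd})S_p(t)$ between two measures that both converge to $\bar\nu_p$. Since $q\geq p$ we have $\pi_p\leq\pi_q$ stochastically, and trivially $\pi_q\leq\delta_{\Zd}$, giving
\[
\pi_p\times\delta_{\Zd} \;\leq\; \pi_q\times\delta_{\Zd} \;\leq\; \delta_{\Zd}\times\delta_{\Zd},
\]
which Proposition~\ref{paper1:attr} propagates under $S_p(t)$ for every $t\geq 0$. By definition the upper bound $(\delta_{\Zd}\times\delta_{\Zd})S_p(t)$ converges to $\bar\nu_p$, so once the lower bound $(\pi_p\times\delta_{\Zd})S_p(t)$ is also shown to converge to $\bar\nu_p$, the usual stochastic-sandwich argument finishes the proof: any cylinder function on $\{0,1\}^{\Zd}\times\{0,1\}^{\Zd}$ is a linear combination of increasing cylinder functions (the monomials $\prod_{i\in A}x_i$ span), and the inequalities pass to the limit on each such function.

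For the lower bound, I would first observe monotone decrease in $t$. Since $\pi_p$ is invariant for the background dynamics, the first marginal of $(\pi_p\times\delta_{\Zd})S_p(s)$ remains $\pi_p$ for all $s\geq 0$, while its second marginal is trivially dominated by $\delta_{\Zd}$. Comparing against an arbitrary increasing $f$ (replacing the second coordinate by $\Zd$ can only increase it) gives $(\pi_p\times\delta_{\Zd})S_p(s)\leq\pi_p\times\delta_{\Zd}$, and applying $S_p(t)$ via attractivity then shows $(\pi_p\times\delta_{\Zd})S_p(t+s)\leq(\pi_p\times\delta_{\Zd})S_p(t)$. Hence a weak limit $\bar\nu_p^{(p)}$ exists, is an invariant measure, and satisfies $\bar\nu_p^{(p)}\leq\bar\nu_p$ by comparison with the upper bound. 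The content is the reverse inequality.

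To get $\bar\nu_p^{(p)}\geq\bar\nu_p$, I apply the same ``first marginal times $\delta_{\Zd}$'' domination on the other side. Let $\mu_t$ denote the distribution of $B_t^{\Zd}$. Then $(\delta_{\Zd}\times\delta_{\Zd})S_p(t)\leq\mu_t\times\delta_{\Zd}$, so attractivity gives
\[
(\delta_{\Zd}\times\delta_{\Zd})S_p(t+s) \;\leq\; (\mu_t\times\delta_{\Zd})S_p(s)
\]
for every $s\geq 0$. Fix $s$ and let $t\to\infty$: the left-hand side converges weakly to $\bar\nu_p$, and since $B_t^{\Zd}$ is a product of independent two-state Markov chains with stationary distribution Bernoulli$(p)$ mixing exponentially at rate $\gamma$, $\mu_t\to\pi_p$ weakly. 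The CPREE is Feller (bounded rates, finite-range graphical construction), so $(\mu_t\times\delta_{\Zd})S_p(s)\to(\pi_p\times\delta_{\Zd})S_p(s)$ weakly. Passing the inequality to the limit on increasing continuous test functions yields $\bar\nu_p\leq(\pi_p\times\delta_{\Zd})S_p(s)$ for every $s$, and then $s\to\infty$ gives $\bar\nu_p\leq\bar\nu_p^{(p)}$, hence equality.

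The main obstacle is this last step. A priori, starting the background at $\pi_p$ rather than at $\delta_{\Zd}$ might trap the joint process in a strictly smaller invariant measure, and it is not obvious that $\bar\nu_p^{(p)}=\bar\nu_p$ whenever survival occurs. Ruling this out requires using that the background started at $\delta_{\Zd}$ mixes to $\pi_p$, combined with the Feller continuity of the semigroup, to pull the all-ones background dynamics back into the $\pi_p$-stationary regime.
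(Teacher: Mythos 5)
Your proposal is correct, and its skeleton (reduce to $q=p$ by sandwiching, show $(\pi_p\times\delta_{\Zd})S_p(t)$ is stochastically decreasing in $t$ so a stationary limit $\nu_p'$ exists with $\nu_p'\leq\bar\nu_p$) coincides with the paper's. Where you diverge is in the reverse inequality $\bar\nu_p\leq\nu_p'$, which is the real content. You run the dynamics from $\delta_{\Zd}\times\delta_{\Zd}$, dominate the time-$t$ law by $\mu_t\times\delta_{\Zd}$ where $\mu_t$ is the law of the background started from all ones, and then let $t\to\infty$ using the exponential mixing of the background to $\pi_p$ together with the Feller property of the semigroup to pass to $(\pi_p\times\delta_{\Zd})S_p(s)$. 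The paper instead observes that \emph{any} stationary measure $\mu$ of the joint process has first marginal $\pi_p$, hence satisfies $\mu\leq\pi_p\times\delta_{\Zd}$, and then applies $S_p(t)$ and stationarity of $\mu$ to conclude $\mu\leq\nu_p'$; taking $\mu=\bar\nu_p$ finishes. Both arguments are sound. The paper's is slightly more economical (no appeal to Feller continuity, which is standard here but is never established explicitly in the paper) and yields the stronger by-product that $\nu_p'$ dominates every stationary measure; yours has the advantage of making explicit the probabilistic mechanism -- the all-ones background relaxes to $\pi_p$, so starting it at $\pi_p$ loses nothing in the limit -- which is exactly the ``obstacle'' you correctly identify. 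If you keep your route, you should say a word about why the semigroup is weakly continuous (e.g., a finite-speed-of-propagation estimate from the graphical representation), since that is the one ingredient you use that the paper does not.
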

\begin{proof}
By simple stochastic comparison, it is enough to consider the case when $q=p$. We begin to establish the existence of that limit. Since $\pi_p$ is the stationary distribution for the background process and the right marginal always occupies less than or equal to the whole $\{0,1\}^{\Zd}$, we have 
\[
(\pi_p\times\delta_{\Zd})S_p(t)\leq \pi_p \times \delta_{\Zd} \quad \forall t > 0.
\] 
Using attractiveness and the Markov property yields
\[
(\pi_p\times\delta_{\Zd})S_p(s+t)\leq (\pi_p \times \delta_{\Zd})S_p(s) \quad \forall s, t > 0,
\]
and so the existence of the limit is clear from monotonicity. Denote this limit by $\nu_p^\prime$ and observe it is necessarily stationary. It is clear that $\nu_p^\prime\leq\bar{\nu}_p$ so we are done if $\bar{\nu}_p\leq\nu_p^\prime$. For this, note that attractiveness again gives that the map
\[
\mu \mapsto \E^\mu [f(\delta_t,\eta_t)]
\]
is increasing whenever $f$ is continuous and increasing. Using this, and the fact that any stationary distribution necessarily has as first marginal $\pi_p$, we can do the following calculation for any stationary distribution $\mu$ of $(B_t,C_t)$ and $f: \{0,1\}^{\Zd}\times\{0,1\}^{\Zd} \to \RR$ continuous and increasing:
\[
\int f \,d\mu = \E^\mu [f(\delta_t,\eta_t)] \leq \E^{\pi_p \times \delta_{\Zd}} [f(\delta_t,\eta_t)] \to \int f \,d\nu_p^\prime \quad \text{as } t\to\infty.
\]
Hence, $\mu\leq \nu_p^\prime$ and we are done.
\end{proof} 
\noindent\begin{proofoft}{\ref{paper1:thm2}}
When the initial distribution of the background process is $\pi_p$, it is easy to see from the graphical representation that $C_t$ is self-dual in the sense that
\begin{equation}
\label{paper1:dualeq}
\P_p [\, C_t^{\pi_p,A}\cap B \neq \emptyset\,]=\P_p [\, C_t^{\pi_p,B}\cap A \neq \emptyset\,] \quad \forall t > 0,\: A, B \subseteq \Zd.
\end{equation}
If we take $A=\{ 0\}$, $B=\Zd$ in this equation and let $t \to \infty$ using the previous lemma, we can easily conclude that 
\[
\P_p [\, C_t^{\pi_p,\{0\}} \neq \emptyset \, \forall t > 0 \,] > 0 \quad  \iff  \quad \bar{\nu}_p  \neq \pi_p\times\delta_{\emptyset}
\]
and we are done.
\end{proofoft} 

\noindent\Remark There is a weaker duality equation when the initial distribution of the background process differs from $\pi_p$, but this is less natural and seems less useful.   

\section{Proof of Theorem \ref{paper1:thm3}}

We now turn to the proof of Theorem~\ref{paper1:thm3}, that the critical CPREE dies out. Once Lemma~\ref{paper1:lemma1} below is established, the rest follows similar lines as in the proofs of Theorem~\ref{paper1:GB} carried out in \cite{GrimBez} and \cite{Liggett99}. Our main goal is to prove that if $\{C_t\}$ survives at $p>0$, then there is a number $\delta>0$ and integers $n,a$ such that
\begin{equation}\label{paper1:goal}
\P_{p-\delta}[\,C_t^{\emptyset,[-n,n]^d}\text{ survives in }\Z\times [-5a,5a]^{d-1}\times [0,\infty) \,]>0.
\end{equation} 
If $p_c\in(0,1]$, this will immediately imply 
\[
\P_{p_c}[\,C_t^{\emptyset,\{0\}}\neq\emptyset\:\forall t\geq 0 \,]=0.
\]
To achieve \eqref{paper1:goal}, we begin by showing that if the CPREE survives, then it is very likely to have survival if the initial configuration is sufficiently large even if we start with all zeros in the background process.
\begin{lemma}\label{paper1:lemma1}
If $\{C_t\}$ survives at $p>0$ then 
\[
\lim_{n\to\infty} \P_p [\,C_t^{\emptyset,[-n,n]^d}\neq\emptyset\,\forall t>0 \,]=1.
\]
\end{lemma}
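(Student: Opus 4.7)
\medskip

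\noindent\textbf{Proof plan.} The plan is to obtain Lemma~\ref{paper1:lemma1} from a simple $\Zd$-ergodicity argument applied to the graphical representation of Section~\ref{paper1:graphrepr}. First I reduce the hypothesis: by Theorem~\ref{paper1:thm1} (in particular the equivalence~\eqref{paper1:stat2}, combined with translation invariance as used in~\eqref{paper1:stat1}), survival at $p$ is equivalent to
\[
\rho := \P_p[\,C_t^{\emptyset,\{0\}} \neq \emptyset\,\forall t>0\,] > 0,
\]
which I take as my starting point.

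Next I introduce the event
\[
F = \bigcup_{x\in\Zd}\{\, C_t^{\emptyset,\{x\}} \neq \emptyset\,\forall t>0\,\}
\]
on the probability space carrying the graphical representation, and I claim $F$ is invariant under every spatial shift $T_y$, $y\in\Zd$. All Poisson ingredients $M^{b,0\to 1}(x),M^{b,1\to 0}(x),N^{\delta_1}(x),N^{\delta_0-\delta_1}(x),\vec{N}^j(x)$ are indexed by $x$, and the construction of $B^{\emptyset}$ and of $\beta$-active paths transforms covariantly under spatial shifts: applying $T_y$ to the full graphical representation and keeping the seed $\{x\}$ yields the same process (up to translating the output by $y$) as leaving the Poissons fixed and taking seed $\{x-y\}$. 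Hence $T_y^{-1}F = F$ modulo the relabelling $x\mapsto x-y$ of the existential quantifier.

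Because the Poisson components are independent across sites, the law of the graphical representation is a product measure indexed by $\Zd$, so the $\Zd$-action by spatial shifts is mixing, in particular ergodic; every shift-invariant event therefore has probability $0$ or $1$. Since
\[
\P_p(F) \geq \P_p[\,C_t^{\emptyset,\{0\}} \neq \emptyset\,\forall t>0\,] = \rho > 0,
\]
ergodicity forces $\P_p(F) = 1$.

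Additivity then closes the argument. If $\omega\in F$ with witness $x$, then for every $n$ with $x\in[-n,n]^d$ the containment $C_t^{\emptyset,[-n,n]^d}\supseteq C_t^{\emptyset,\{x\}}$ gives $C_t^{\emptyset,[-n,n]^d}\neq\emptyset$ for all $t>0$. Therefore
\[
F \subseteq \bigcup_n \{\,C_t^{\emptyset,[-n,n]^d}\neq\emptyset\,\forall t>0\,\},
\]
and since the events on the right are increasing in $n$, continuity of $\P_p$ from below gives $\lim_{n\to\infty}\P_p[\,C_t^{\emptyset,[-n,n]^d}\neq\emptyset\,\forall t>0\,]\geq \P_p(F)=1$, as required. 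I expect the only real bookkeeping to be a careful justification of the translation covariance of the graphical construction; the probabilistic content is just $\Zd$-ergodicity of the i.i.d.\ product law of the Poisson clocks.
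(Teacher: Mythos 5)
Your proof is correct, but it is not the argument the paper's displayed proof of Lemma~\ref{paper1:lemma1} uses; it is essentially identical to the shorter alternative proof, attributed to Olle H\"aggstr\"om, that the paper records in the remark immediately after that proof (your $F$ is the event that some $Y_x^{\emptyset}=1$ there). The paper's own route is much heavier: it combines the self-duality relation \eqref{paper1:dualeq} with Lemma~\ref{paper1:upper_conv1} and the nontriviality of $\bar{\nu}_p$ to get survival from $[-n,n]^d$ with background $\pi_p$ with probability at least $1-\delta$, then invokes a separate continuity lemma to replace $\pi_p$ by $\pi_{p-\epsilon}$, dominates $\pi_{p-\epsilon}$ by the all-zeros background run for a time $s$, and finally conditions on the event that some side-$n$ sub-box of $[-mn,mn]^d$ carries no recovery marks up to time $s$ so as to trade the $\emptyset$ background for the near-stationary one. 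Your argument --- translation covariance of the graphical construction with the shift-invariant initial background $\emptyset$, ergodicity of the i.i.d.\ product law of the site-indexed Poisson clocks (so the shift-invariant event $F$ is trivial), positivity of $\P_p(F)$ via Theorem~\ref{paper1:thm1} applied with $q=0$ (note $\delta_\emptyset=\pi_0$), and then additivity $C_t^{\emptyset,[-n,n]^d}\supseteq C_t^{\emptyset,\{x\}}$ plus continuity from below --- bypasses all of that and is the cleaner proof of this particular lemma; the only point deserving care, as you note, is the covariance bookkeeping making $F$ genuinely shift-invariant, and your treatment of it is fine. The longer proof does not buy anything extra for this lemma itself (its ingredients, duality and Lemma~\ref{paper1:upper_conv1}, are developed for other purposes in the paper), so your approach is a legitimate and preferable substitute.
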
 
For the proof of this we use the following result.
\begin{lemma}
For all $n\geq 1$, we have
\[
\lim_{\epsilon\to 0}\P_p[\,C_t^{\pi_{p-\epsilon},[-n,n]^d}\neq\emptyset\,\forall t>0\,]=\P_p[\,C_t^{\pi_p,[-n,n]^d}\neq\emptyset\,\forall t>0\,].
\]
\end{lemma}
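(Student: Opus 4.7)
The plan is to construct a monotone coupling of $\pi_{p-\epsilon}$ and $\pi_p$ on the shared graphical representation, exploit pointwise convergence of the two infection processes via an analysis of active paths, and then upgrade the resulting finite-time convergence to convergence of the full survival probability using a Richardson-set truncation combined with a Markov restart.

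I would first fix i.i.d.\ Uniform$[0,1]$ variables $\{U_x\}_{x\in\Zd}$, independent of the Poisson data of Section~\ref{paper1:graphrepr}, and set $\beta_q(x) := \mathbf{1}_{\{U_x \leq q\}}$, so that $\beta_q \sim \pi_q$ and $\beta_{p-\epsilon} \leq \beta_p$ pointwise. Feeding both initial backgrounds into the shared graphical representation and using Proposition~\ref{paper1:attr}, I obtain $C_t^{\beta_{p-\epsilon},[-n,n]^d} \subseteq C_t^{\beta_p,[-n,n]^d}$ for all $t$, which gives the easy direction $\limsup_{\epsilon \to 0} \P_p[C_t^{\pi_{p-\epsilon},[-n,n]^d} \neq \emptyset\,\forall t > 0] \leq \P_p[C_t^{\pi_p,[-n,n]^d} \neq \emptyset\,\forall t > 0]$. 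For the reverse inequality I would establish pointwise convergence of the processes: fix $(x,t)$ and suppose $C_t^{\pi_p,[-n,n]^d}(x) = 1$, witnessed by a $\beta_p$-active path $\gamma$ from $[-n,n]^d \times \{0\}$ to $(x,t)$ through some finite site set $S$. The extra points of $X^{\beta_q} \setminus X^{\beta_p}$ on $S$ can come only from points of $N^{\delta_0-\delta_1}$ at sites $y \in S$ with $U_y \in (q,p]$, and only during the finite window before the background at $y$ couples. For $q$ close enough to $p$, namely $q > \max\{U_y : y \in S,\,U_y \leq p\}$, no such site exists, so $B^{\beta_q}$ and $B^{\beta_p}$ agree on $S$ throughout time and $\gamma$ is also $\beta_q$-active; combined with monotonicity on $\{C_t^{\pi_p,[-n,n]^d}(x) = 0\}$, this yields $C_t^{\pi_q,[-n,n]^d}(x) \to C_t^{\pi_p,[-n,n]^d}(x)$ a.s.\ for each $(x,t)$, and the death time $\sigma_q := \inf\{t > 0 : C_t^{\pi_q,[-n,n]^d} = \emptyset\}$ satisfies $\sigma_q \uparrow \sigma_p$ a.s.\ as $q \uparrow p$.

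The main obstacle is the passage from the a.s.\ convergence $\sigma_q \uparrow \sigma_p$ to convergence of survival probabilities $\P_p[\sigma_q = \infty] \to \P_p[\sigma_p = \infty]$: a priori one could have $\sigma_q < \infty$ for every $q < p$ while $\sigma_p = \infty$. To rule this out I would work on a finite horizon via the Richardson set $A_T := \tilde{C}_T^{[-n,n]^d}$, which is a.s.\ finite and depends only on the arrow Poisson processes, hence is independent of the initial-background coupling. On the event $G_T^\epsilon := \{\beta_{p-\epsilon}|_{A_T} = \beta_p|_{A_T}\}$ the two processes agree on $[0,T]$, and bounded convergence gives $\P_p[G_T^\epsilon] = \E[(1-\epsilon)^{|A_T|}] \to 1$, so $\P_p[\sigma_{p-\epsilon} > T] \to \P_p[\sigma_p > T]$ for each fixed $T$. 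To interchange this with the decreasing limit $\P_p[\sigma_q = \infty] = \inf_T \P_p[\sigma_q > T]$, I would use a Markov restart at time $T$: on $G_T^\epsilon \cap \{\sigma_p > T\}$ the two processes agree in $A_T$ at time $T$, and the background discrepancies outside $A_T$ at time $T$ have density at most $\epsilon e^{-\gamma T}$ since at each initial-discrepancy site the backgrounds couple after an Exp$(\gamma)$ waiting time. By taking $T$ large first (to make $\P_p[\sigma_p > T] - \P_p[\sigma_p = \infty]$ small) and then $\epsilon$ small (to make the residual discrepancy density negligible on the post-$T$ Richardson cone), the gap $\P_p[\sigma_p = \infty] - \P_p[\sigma_{p-\epsilon} = \infty]$ can be made arbitrarily small. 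Rigorously controlling this post-$T$ contribution---verifying that sparse background discrepancies outside $A_T$ at time $T$ cannot destroy the survival event---is the technical heart of the argument, and is where the exponential background mixing at rate $\gamma$ plays the decisive role.
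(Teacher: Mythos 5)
Your setup (monotone coupling of the initial backgrounds, the easy monotone direction, and the finite-horizon agreement via the Richardson set and initial agreement on the relevant sites) matches the paper's proof. But the decisive step --- controlling what happens \emph{after} time $T$ --- is exactly the part you leave unproved, and your sketch of it ("sparse background discrepancies outside $A_T$ at time $T$ cannot destroy the survival event") is not an argument: a discrepancy set of small density is still infinite, the Richardson cone started from $A_T$ grows without bound, and whether the infection ever meets a still-uncoupled site is precisely the question. Making "take $T$ large, then $\epsilon$ small" work would force you to compare the linear growth of the Richardson set against the exponential decay of the probability that a given site has not yet coupled, via Borel--Cantelli --- i.e.\ to reprove Lemma~\ref{paper1:BN}, which you never invoke.

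The paper closes this gap with one observation you are missing: under the natural coupling, $\varphi_t \subseteq \varphi_t^{\epsilon}$ for all $t$ and all $\epsilon$, where $\varphi_t$ is the agreement set of the two \emph{extreme} backgrounds ($B^{\emptyset}$ versus $B^{\Zd}$) and $\varphi_t^{\epsilon}$ that of $B^{\pi_{p-\epsilon}}$ versus $B^{\pi_p}$. The already-established Lemma~\ref{paper1:BN} (trivially adapted to initial set $[-n,n]^d$) gives a $T$, \emph{independent of $\epsilon$}, such that with probability close to $1$ the Richardson set $\tilde{C}_t^{[-n,n]^d}$ is contained in $\varphi_t$, hence in $\varphi_t^{\epsilon}$, for all $t\geq T$. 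On that event the two backgrounds agree for all $t\geq T$ at every site the infection could possibly occupy, so after adding your finite-horizon events (Richardson set confined to $[-m,m]^d$ on $[0,T]$, initial backgrounds equal on $[-m,m]^d$) the two infection processes are \emph{literally identical for all time}, and survival for $\pi_p$ implies survival for $\pi_{p-\epsilon}$ outright. No restart at time $T$, no density estimate, and no separate post-$T$ analysis is needed. As written, your proposal identifies the right obstacle but does not overcome it, so the proof is incomplete at its technical heart.
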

\begin{proof}
Fix $n\geq 1$. The probability on the left increases when $\epsilon$ decreases and so the limit exists and is clearly at most the right hand side. For the other inequality let $\delta>0$ and define
\[
\varphi_t^{\epsilon}(x)=1_{\{B_t^{\pi_{p-\epsilon}}(x)=B_t^{\pi_p}(x)\}} \quad x\in \Zd,\: t\geq 0,
\]
where $\pi_{p-\epsilon}$ and $\pi_p$ are coupled in the usual monotone way. Recall the definition of $\varphi_t$ from the proof of Theorem~\ref{paper1:thm1} and observe that 
\[
\varphi_t \subseteq\varphi_t^{\epsilon}\quad\forall t>0, \:\forall\epsilon>0. 
\]
Also, an easy modification of the proof of Lemma~\ref{paper1:BN} yields
\[
\lim_{T\to\infty}\P_p[\,\tilde{C}_t^{[-n,n]^d}\subseteq\varphi_t \:, \forall t\geq T\,]=1.
\]
(Recall that $\tilde{C}_t^A$ is the CPREE starting from the configuration $A$ but with no recoveries.) This allows us to choose $T>0$ such that
\[
\begin{split}
&\P_p[\,C_t^{\pi_p,[-n,n]^d}\neq\emptyset\,\forall t>0 \,] \\
&\quad\leq\P_p[\,\tilde{C}_t^{[-n,n]^d}\subseteq\varphi_t \:, \forall t\geq T,\,C_t^{\pi_p,[-n,n]^d}\neq\emptyset\,\forall t>0\,]+\delta.
\end{split}
\]
Given this $T$, choose $m\geq 1$ such that
\[
\P_p[\,\tilde{C}_t^{[-n,n]^d}\subseteq [-m,m]^d \: \forall\: 0\leq t\leq T\,]>1-\delta
\]
and for that $m$ choose $\epsilon_0>0$ such that
\[
\P_p[\,B_0^{\pi_{p-\epsilon}}=B_0^{\pi_p} \text{ on }[-m,m]^d\,] > 1-\delta,\quad \forall\, 0<\epsilon\leq\epsilon_0.
\]
Now since
\[
\begin{split}
 &\{\,\tilde{C}_t^{[-n,n]^d}\subseteq\varphi_t \:, \forall t\geq T,\,\tilde{C}_t^{[-n,n]^d}\subseteq [-m,m]^d \: \forall\: 0\leq t\leq T,\\
&\quad\,B_0^{\pi_{p-\epsilon}}=B_0^{\pi_p} \text{ on }[-m,m]^d,\,C_t^{\pi_p,[-n,n]^d}\neq\emptyset\,\forall t>0 \,\} \\
&\quad\subseteq \{\,C_t^{\pi_{p-\epsilon},[-n,n]^d}\neq\emptyset\,\forall t>0\,\},
\end{split}
\]
we get
\[
\begin{split}
&\P_p[\,C_t^{\pi_p,[-n,n]^d}\neq\emptyset\,\forall t>0 \,] \\
&\quad\leq\P_p[\,C_t^{\pi_{p-\epsilon},[-n,n]^d}\neq\emptyset\,\forall t>0 \,]+3\delta,
\end{split}
\]
whenever $0<\epsilon\leq\epsilon_0$ and so the proof is complete.
\end{proof} 
\noindent\begin{proofofl}{\ref{paper1:lemma1}}
Let $\delta>0$. From the self-duality equation \eqref{paper1:dualeq}, Lemma \ref{paper1:upper_conv1} and the easily verified fact that the second marginal of $\bar{\nu}_p$ gives zero measure to \small$\emptyset$\normalsize, we easily get that there is an $n\geq 1$ such that
\[
\P_p[\,C_t^{\pi_p,[-n,n]^d}\neq\emptyset\,\forall t>0 \,]>1-\delta.
\]
The previous lemma makes it possible to now choose an $\epsilon>0$ such that
\[
\P_p[\,C_t^{\pi_{p-\epsilon},[-n,n]^d}\neq\emptyset\,\forall t>0 \,]>1-\delta.
\]
Denote the semigroup operator associated with the background process by $T(t)$ and note that for $\epsilon$ above there is a time $s$ such that
\[
\delta_\emptyset T(s) \geq \pi_{p-\epsilon}.
\]
Now, let $B_{m,n}$ denote the box in $\Zd$ with sidelength $m n$ and write
\[
B_{m,n}=\bigcup_{i=1}^{m^d} A_i,
\]
where each $A_i$ is a translation of the box with sidelength $n$ and with the $A_i$'s disjoint. Then, define
\[
A_{m,n}^s=\{\,\text{No arrivals in }N^{\delta_1}\text{ or }N^{\delta_0-\delta_1}\text{ up to time $s$ in some }A_i \,\}.
\]
Given $n$ and $s$, we can choose $m$ so large that
\[
\P_p[\,A_{m,n}^s \,]>1-\delta.
\]
The proof is finished by noting that monotonicity easily implies that
\[
\P_p [\,C_t^{\emptyset,[-mn,mn]^d}\neq\emptyset\,\forall t>0\,|\,A_{m,n}^s \,]\geq\P_p[\,C_t^{\pi_{p-\epsilon},[-n,n]^d}\neq\emptyset\,\forall t>0 \,],
\]
using the fact that $A_{m,n}^s$ is independent of the background process.
\end{proofofl}

\noindent\Remark A slightly more abstract but considerably shorter proof of Lemma \ref{paper1:lemma1} is found by Olle Häggström after submission
of the paper and is as follows. For $x\in \Zd$, let $Y_x^{\emptyset}$ be the indicator variable for survival when the process starts with only $x$ infected and all zeros in the background process. By translation invariance, $\P_p[\,Y_x^{\emptyset}=1 \,]$ is independent of $x$ and by Theorem \ref{paper1:thm1} we know that it is positive. It follows from the graphical representation that the process $\{Y_x^{\emptyset}\}_{x\in \Zd}$ is ergodic and hence a.s.\ there is some $x$ for which $Y_x^{\emptyset}=1$. Moreover, the event in Lemma \ref{paper1:lemma1} occurs as soon as some site in $[-n,n]^d$ has $Y_x^{\emptyset}=1$ and so the lemma follows at once. 

We have now set up the necessary ground work for our model in order to be able to follow the steps in \cite{Liggett99}. For $L\geq 1$ and $A\subseteq (-L,L)^d$, let $\CL_t^{\emptyset,A}$ be the truncated process,  using only \small$\emptyset$\normalsize-active paths (recall Definition~\ref{paper1:paths}) which stay in $(-L,L)^d\times [0,t]$. 
\begin{lemma}\label{paper1:lemma2}
For all finite $A\subseteq\Zd$ and $N\geq 1$, we have
\[
\lim_{t\to\infty}\lim_{L\to\infty}\P_p[\,|\CL_t^{\emptyset,A}|\geq N \,]=\P_p[\,C_t^{\emptyset,A}\neq\emptyset\,\forall t >0 \,]
\]
\end{lemma}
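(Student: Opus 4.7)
The strategy is to dispatch the inner limit by monotonicity, reducing to a statement about the full process $C_t^{\emptyset,A}$, and then to show that on survival $|C_t^{\emptyset,A}|\to\infty$ almost surely. Every $\emptyset$-active path contained in $(-L,L)^d\times[0,t]$ is also allowed in $(-L',L')^d\times[0,t]$ for $L'\geq L$, so $\CL_t^{\emptyset,A}$ is non-decreasing in $L$ with union $C_t^{\emptyset,A}$. Continuity of measure gives
\[
\lim_{L\to\infty}\P_p[|\CL_t^{\emptyset,A}|\geq N]=\P_p[|C_t^{\emptyset,A}|\geq N],
\]
so it suffices to prove $\lim_{t\to\infty}\P_p[|C_t^{\emptyset,A}|\geq N]=\P_p[C_t^{\emptyset,A}\neq\emptyset\,\forall t>0]$.

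The upper bound is immediate: $\{|C_t^{\emptyset,A}|\geq N\}\subseteq\{C_t^{\emptyset,A}\neq\emptyset\}$, and the latter events decrease monotonically to the survival event as $t\to\infty$, since the process is absorbed when it hits $\emptyset$. For the lower bound I plan to show that $|C_t^{\emptyset,A}|\to\infty$ almost surely on survival; Fatou's lemma applied to $\liminf_t 1_{\{|C_t^{\emptyset,A}|\geq N\}}$ then yields $\liminf_{t\to\infty}\P_p[|C_t^{\emptyset,A}|\geq N]\geq\P_p[\text{survival}]$, closing the proof.

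The key ingredient is a uniform killing estimate: there exists $c=c(N,d,\delta_1)>0$ such that for every stopping time $T$ of the graphical representation,
\[
\P_p[\,C_{T+1}^{\emptyset,A}=\emptyset\,|\,\mathcal{F}_T\,]\geq c\quad\text{on }\{|C_T^{\emptyset,A}|\leq N-1\}.
\]
One gets this by intersecting two independent Poisson events on $[T,T+1]$: (a) no arrow $\vec{N}^j$ emanates from any of the at most $N-1$ sites of $C_T$, and (b) every site of $C_T$ is hit by at least one point of $N^{\delta_1}$. Both have probability bounded below by a constant depending only on $N,d,\delta_1$, and they are independent of the background $B$: an $N^{\delta_1}$ arrival always produces a point of $X^{\beta,T}$ and hence a recovery regardless of $B$, and combined with (a) this empties $C$ by time $T+1$. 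Defining stopping times $\sigma_0=0$, $\sigma_{k+1}=\inf\{t\geq\sigma_k+1:|C_t^{\emptyset,A}|\leq N-1\}$, on the event that survival occurs together with $|C_t^{\emptyset,A}|\leq N-1$ infinitely often every $\sigma_k$ is finite, and L\'evy's conditional Borel--Cantelli lemma applied to the killing estimate at each $\sigma_k$ forces $C_{\sigma_k+1}=\emptyset$ for some $k$, a contradiction.

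The main technical obstacle is that the killing probability $c$ really must be uniform in the random background configuration $B_{\sigma_k}$, because the strong Markov property is applied to the joint process $(B_t,C_t)$; this is precisely why the argument routes the recoveries through $N^{\delta_1}$ rather than through the $B$-dependent component $N^{\delta_0-\delta_1}$. Once this uniformity is secured, the Borel--Cantelli and Fatou steps are standard.
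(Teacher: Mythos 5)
Your proof is correct and follows essentially the same route as the paper: dispatch the inner limit by monotonicity of $\CL_t^{\emptyset,A}$ in $L$, reduce to showing $|C_t^{\emptyset,A}|\to\infty$ a.s.\ on survival, and obtain that from a background-uniform extinction estimate for configurations of bounded size (your routing of recoveries through $N^{\delta_1}$ alone is exactly the point the paper also exploits). The only difference is in the final step: the paper combines the one-shot bound $\left(\delta_1/(\delta_0+\gamma+2d)\right)^M$ with the martingale (L\'evy) convergence $\P_p[\,\text{survival}\,|\,\mathcal F_s\,]\to 1_{\{\text{survival}\}}$, whereas you run a conditional Borel--Cantelli argument along stopping times; both are standard and valid.
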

\begin{proof}
Fix $A$ and $N$. Since 
\[
C_t^{\emptyset,A}=\bigcup_{L=1}^\infty\CL_t^{\emptyset,A},
\] 
we easily get that for fixed $t$
\[
\P_p[\,|C_t^{\emptyset,A}|\geq N \,]=\lim_{L\to\infty} \P_p[\,|\CL_t^{\emptyset,A}|\geq N \,],
\]
and so we are done if
\[
\lim_{t\to\infty}\P_p[\,|C_t^{\emptyset,A}|\geq N \,]=\P_p[\,C_t^{\emptyset,A}\neq\emptyset\,\forall t >0 \,].
\]
For this, it is enough to check two things:
\begin{align*}
\lim_{t\to\infty}&\P_p[\,|C_t^{\emptyset,A}|\geq N,C_s^{\emptyset,A}=\emptyset\,\text{ some } s >0 \,]=0 \\
\lim_{t\to\infty}&\P_p[\,|C_t^{\emptyset,A}|\geq N,C_s^{\emptyset,A}\neq\emptyset\,\forall s >0 \,]=\P_p[\,C_t^{\emptyset,A}\neq\emptyset\,\forall t >0 \,]
\end{align*}
The first equality follows easily by applying Fatou's Lemma. The second one follows if
\[
\lim_{t\to\infty}|C_t^{\emptyset,A}|=\infty \quad \text{a.s} \quad \text{on} \quad \{\,C_t^{\emptyset,A}\neq\emptyset\,\forall t >0 \,\}.
\]
Assume the contrary, i.e.
\begin{equation}\label{paper1:contr}
\P_p[\,|C_t^{\emptyset,A}|\text{ does not converges to infinity, } C_s^{\emptyset,A}\neq\emptyset\,\forall s >0 \,]>0.
\end{equation}
From the martingale convergence theorem we get that 
\begin{equation}\label{paper1:martconv}
\P_p[\,C_t^{\emptyset,A}\neq\emptyset\,\forall t \geq s\, |\, \mathcal F_s \,]\to 1_{\{\,C_t^{\emptyset,A}\neq\emptyset\,\forall t >0\,\}}\quad \text{ as } s \to\infty,
\end{equation}
where $\mathcal F_s$ is the $\sigma$-algebra generated by the whole process up to time $s$. Equation \eqref{paper1:contr} and \eqref{paper1:martconv} implies that with positive probability the following can happen:
\begin{align*}
&\lim_{s\to\infty}\P^{(\beta_s,C_s)}[\,C_t\neq\emptyset\,\forall t>0 \,]=1 \\
&\exists \: M>0, \: \{\tau_i\}_{i\geq 1} \: \ni \: \tau_1<\tau_2< \ldots <\tau_i \to \infty\,, |C_{\tau_i}|\leq M \: \forall i.
\end{align*}
However, using elementary facts about exponentially distributed variables, we get
\[
\begin{split}
&\P^{(\beta_{\tau_i},C_{\tau_i})}[\,C_t=\emptyset \text{ some } t>0 \,] \\
&\quad\geq \P^{(\Zd,C_{\tau_i})}[\,C_t=\emptyset \text{ some } t>0 \,] \geq\left(\frac{\delta_1}{\delta_0+\gamma+2d}\right)^M \quad\forall i,
\end{split}
\]
which yields a contradiction and the proof is complete.
\end{proof} 
The next step is to take care of the sides of the space-time box. Define
\[
S(L,T)=\{\,(x,t)\in\Zd\times [0,T]:\,|x|_{\infty}=L \,\}.
\]
Fix $A\subseteq (-L,L)^d$ and look at all points on $S(L,T)$ that can be reached from $A$ by an \small$\emptyset$\normalsize-active path using vertical segments where the space coordinate is in $(-L,L)^d$ and infection arrows from $(x,\cdot)$ to $(y,\cdot)$ with $x\in (-L,L)^d$. Define $N_\emptyset^A(L,T)$ to be the maximum number of such points with the following property: If $(x,t_1)$ and $(x,t_2)$ are any two points with the same spatial coordinate, then $|t_1-t_2|\geq 1$.
\begin{lemma}\label{paper1:lemma3}
Assume $L_j \nearrow \infty$ and $T_j \nearrow \infty$. Then for any $M,N \geq 1$ and finite $A\subseteq\Zd$, we have
\[
\limsup_{j\to\infty} \P_p[\,N_\emptyset^A(L_j,T_j)\leq M \,]\,\P_p[\,|{}_{L_j}\,{}^{\!\!} C_{T_j}^{\emptyset,A}|\leq N\,] \leq \P_p[\,C_t^{\emptyset,A}=\emptyset\text{ some }t>0 \,].
\]
\end{lemma}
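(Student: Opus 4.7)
I would follow the template of the corresponding step in the Bezuidenhout--Grimmett argument as presented in Liggett \cite{Liggett99}. Set $E_1^j := \{N_\emptyset^A(L_j, T_j) \leq M\}$ and $E_2^j := \{|{}_{L_j}C_{T_j}^{\emptyset,A}| \leq N\}$. The argument relies on two ingredients: positive correlation of $E_1^j$ and $E_2^j$, and an asymptotic containment of the survival event in $\liminf_j (E_1^{j,c} \cup E_2^{j,c})$.

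For the first ingredient, inspecting the graphical construction from Section~\ref{paper1:graphrepr}, both events are monotone in the same direction with respect to each of the underlying independent Poisson point processes: decreasing in the infection arrow processes $\vec N^{\pm e_i}$ and in $M^{b,0\to 1}$, and increasing in $N^{\delta_1}$, $N^{\delta_0-\delta_1}$, and $M^{b,1\to 0}$ (recall $\delta_1 \leq \delta_0$, so state $1$ in the background gives the lower recovery rate, which is bad for extinction). A standard FKG/Holley inequality for independent Poisson point processes, obtained by a time-discretization and limit from the Bernoulli case, then gives $\P_p[E_1^j \cap E_2^j] \geq \P_p[E_1^j]\,\P_p[E_2^j]$.

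For the second ingredient I would show that the event $\{C_t^{\emptyset,A} \neq \emptyset \,\forall t > 0\}$ is a.s.\ contained in $\liminf_j (E_1^{j,c} \cup E_2^{j,c})$, i.e.\ on survival, for all but finitely many $j$ either $N_\emptyset^A(L_j,T_j) > M$ or $|{}_{L_j}C_{T_j}^{\emptyset,A}| > N$; by reverse Fatou and the FKG step this gives the conclusion. The strategy is by contradiction: suppose that on survival both $E_1^{j_k}$ and $E_2^{j_k}$ occur along some random subsequence $j_k$. The spacing condition defining $N_\emptyset^A$ forces the $\leq M$ witnessing escape points to concentrate at only finitely many spatial boundary locations. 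Every point of $C_{T_{j_k}}^{\emptyset,A}$ is reached by an $\emptyset$-active path that either stays inside the box $(-L_{j_k},L_{j_k})^d\times[0,T_{j_k}]$ (so lies in ${}_{L_{j_k}}C_{T_{j_k}}^{\emptyset,A}$, hence is one of at most $N$ such points) or exits through the side (and thus descends from one of the at most $M$ escape clusters, each of which is a.s.\ finite at each finite time). Combined with Lemma~\ref{paper1:lemma2}, which gives $|C_t^{\emptyset,A}| \to \infty$ on survival, this should produce a contradiction.

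The containment step is the technical heart of the argument. The difficulty is that $L_j$ and $T_j$ grow at unrelated rates, so escape descendants can re-enter the box and in principle inflate the truncated count. Handling this requires a careful bookkeeping that exploits both the spacing condition in the definition of $N_\emptyset^A$ (to bound the escape locations) and the finite propagation speed of the infection (to prevent an unbounded number of re-entries from few escape sites), in the spirit of Liggett's analogous step.
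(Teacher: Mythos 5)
Your first ingredient is exactly the paper's: both $\{N_\emptyset^A(L,T)\le M\}$ and $\{|\CL_T^{\emptyset,A}|\le N\}$ are monotone in the same direction in each underlying Poisson process (your list of directions is correct), so Harris/FKG gives $\P_p[E_1^j\cap E_2^j]\ge\P_p[E_1^j]\,\P_p[E_2^j]$. The logical skeleton of your second ingredient (show that a.s.\ on survival $E_1^j\cap E_2^j$ fails for all large $j$, then apply reverse Fatou) is also the right one.

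The gap is in how you propose to prove that containment. Your route is a deterministic counting contradiction with $|C_t^{\emptyset,A}|\to\infty$, and this cannot work: on $E_1^{j}\cap E_2^{j}$ there is no bound on $|C_{T_j}^{\emptyset,A}|$ at all, because each of the at most $M$ escape clusters on the sides can have an arbitrarily large (though a.s.\ finite) number of descendants outside the box, and those descendants may freely re-enter; moreover survival is an event about all $t>0$ and is simply not determined by any count at time $T_j$, so no amount of bookkeeping at time $T_j$ produces a contradiction. The containment is a genuinely probabilistic fact, and the missing idea --- the heart of the paper's proof --- is a uniform lower bound on the \emph{conditional extinction probability}: on $\{\,N_\emptyset^A(L,T)+|\CL_T^{\emptyset,A}|\le M+N\,\}$ one has
\[
\P_p[\,C_t^{\emptyset,A}=\emptyset\text{ some }t>0\,|\,\mathcal F_{L,T}\,]\ \ge\ \left(\frac{e^{-4d}\delta_1}{\delta_0+\gamma+2d}\right)^{M+N},
\]
obtained by forcing each of the $\le N$ surviving points at the top of the box to recover before sending out an arrow, and each of the $\le M$ escape intervals on the sides (the spacing condition covers all escape points by $\le M$ time intervals of length $2$ per relevant time line) to emit no arrows (cost $e^{-4d}$ each) and then recover. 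Given this bound, if $E_1^{j}\cap E_2^{j}$ occurred for infinitely many $j$ with positive probability on survival, Lévy's $0$--$1$ law, $\P_p[\,\text{extinction}\,|\,\mathcal F_{L_j,T_j}\,]\to 1_{\{\text{extinction}\}}$ a.s., would be contradicted; hence $\limsup_j(E_1^j\cap E_2^j)\subseteq\{\text{extinction}\}$ a.s., and reverse Fatou plus FKG finish the proof. Without this conditional bound your argument does not close.
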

\begin{proof}
The proof follows the steps of Proposition 2.8 in \cite{Liggett99} with some adjustments. Let $\mathcal F_{L,T}$ denote the $\sigma$-algebra generated by $M^{b,0\to 1}$, $M^{b,1\to 0}$, $N^{\delta_1}$, $N^{\delta_0-\delta_1}$ and $\vec{N}^j$, $j\in \{\,\pm e_1,\ldots,\pm e_d \,\}$  in $(-L,L)^d \times [0,T]$. We first argue that
\begin{equation}\label{paper1:cond1}
\begin{split}
&\P_p[\,C_t^{\emptyset,A}=\emptyset\text{ some }t>0\,|\,\mathcal F_{L,T} \,]\geq\left(\frac{e^{-4d}\delta_1}{\delta_0+\gamma+2d}\right)^k \\ 
&\quad\text{a.s}\quad\text{on}\quad \{\,N_\emptyset^A(L,T)+|\CL_T^{\emptyset,A}|\leq k \,\}
\end{split}
\end{equation}
For $x\in\CL_T^{\emptyset,A}$ there is a conditional probability of at least
\[
\frac{\delta_1}{\delta_0+\gamma+2d} 
\]
that $x$ becomes healthy before it infects any of its neighbors. So, if $|\CL_T^{\emptyset,A}|=m$, then the conditional probability that no $x\in\CL_T^{\emptyset,A}$ contributes to survival is at least
\[
\left(\frac{\delta_1}{\delta_0+\gamma+2d}\right)^m. 
\]
For the sides of the box, consider a time line $\{x\}\times [0,T]$, where $|x|_{\infty}=L$ and let
\[
(x,t_1),\ldots,(x,t_j)
\]
be a maximal set of points that can be reached from $A$ by an \small$\emptyset$\normalsize-active path with the property that each pair is separated by at least distance 1. Let
\[
I=\bigcup_{k=1}^j \{x\}\times(t_k-1,t_k+1)
\]
and note that the probability that there are no arrows coming out from $I$ is at least $e^{-4dj}$. Furthermore, for each interval of length $y$ in the complement of $I$ in $\{x\}\times [0,\infty)$, the probability of the event that if there is at least one arrival of the Poisson processes in the interval with the first one coming from $N^{\delta_1}$ or there is no arrivals at all is 
\[
\left(1-e^{-(\delta_0+\gamma+2d)y}\right)\frac{\delta_1}{\delta_0+\gamma+2d}+e^{-(\delta_0+\gamma+2d)y}\geq \frac{\delta_1}{\delta_0+\gamma+2d}.
\]
By independence, we get that the conditional probability that none of the points in the time line $\{x\}\times [0,T]$ contributes to survival is at least
\[
\left(\frac{e^{-4d}\delta_1}{\delta_0+\gamma+2d}\right)^j.
\]
Now, considering the contribution of different $x$'s yields
\[
\begin{split}
&\P_p[\,C_t^{\emptyset,A}=\emptyset\text{ some }t>0\,|\,\mathcal F_{L,T} \,] \\
&\quad\geq\left(\frac{\delta_1}{\delta_0+\gamma+2d}\right)^{|\CL_T^{\emptyset,A}|}\left(\frac{e^{-4d}\delta_1}{\delta_0+\gamma+2d}\right)^{N^A(L,T)} 
\end{split}
\]
which implies \eqref{paper1:cond1}. For the rest of the proof, one proceeds exactly as in the second half of Proposition 2.8 in \cite[p.~48-49]{Liggett99}. The needed inequality
\[
\begin{split}
&\P_p[\,N_\emptyset^A(L,T)\leq M,|\CL_T^{\emptyset,A}|\leq N \,] \\
&\quad\geq \P_p[\,N_\emptyset^A(L,T)\leq M\,]\P_p[\,|\CL_T^{\emptyset,A}|\leq N \,]
\end{split}
\]
is justified by the fact that $N_\emptyset^A(L,T)$ and $|\CL_T^{\emptyset,A}|$ are increasing functions of $\vec{N}^j$, $j\in \{\,\pm e_1,\ldots,\pm e_d \,\}$ and $M^{b,0\to 1}$, and decreasing in $N^{\delta_1}$, $N^{\delta_0-\delta_1}$ and $M^{b,1\to 0}$. This completes the proof.  
\end{proof} 
We are soon ready to state and prove the so called finite space-time condition. However, we first need two more propositions. We just state them here since the proofs are exactly the same as for Propositions 2.6 and 2.11, pages 46-47 and 49 in \cite{Liggett99}. 
\begin{prop}\label{paper1:prop1}
For every $n,N\geq 1$ and $L\geq n$, we have
\[
\P_p[\,|\CL_t^{\emptyset,[-n,n]^d}\cap[0,L)^d|\leq N \,]\leq \left(\P_p[\,|\CL_t^{\emptyset,[-n,n]^d}|\leq 2^d N \,]\right)^{2^{-d}}
\]
\end{prop}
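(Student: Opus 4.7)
The plan is to establish the inequality by the standard symmetric-decomposition-plus-FKG argument used in the Bezuidenhout--Grimmett framework. First, partition $\Zd \cap (-L, L)^d$ into $2^d$ congruent ``signed orthants'' $R_\sigma$ indexed by $\sigma \in \{+,-\}^d$, chosen so that $R_{+\cdots +} = \Zd \cap [0, L)^d$ and the remaining pieces are the images of $R_{+\cdots +}$ under the $2^d$ coordinate reflections through appropriate half-integer hyperplanes. Set $X_\sigma = |\CL_t^{\emptyset, [-n,n]^d} \cap R_\sigma|$.

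The three ingredients I would combine are: (i) \emph{Symmetry.} The initial set $[-n,n]^d$, the all-zero background, the truncation box $(-L,L)^d$, and the joint law of every driving Poisson process in the graphical representation are invariant under these reflections, so all $X_\sigma$ have the same distribution as $X_{+\cdots +} = |\CL_t^{\emptyset,[-n,n]^d} \cap [0,L)^d|$. (ii) \emph{Pigeonhole.} If every $X_\sigma \leq N$, summing gives $|\CL_t^{\emptyset,[-n,n]^d}| \leq 2^d N$, so
\[
\bigcap_\sigma \{X_\sigma \leq N\} \;\subseteq\; \{\,|\CL_t^{\emptyset,[-n,n]^d}| \leq 2^d N\,\}.
\]
(iii) \emph{FKG.} Each $X_\sigma$ is a coordinatewise increasing function of the ``infection-favouring'' processes $\vec{N}^j$ and $M^{b,0\to 1}$, and a coordinatewise decreasing function of the ``recovery-favouring'' processes $N^{\delta_1}$, $N^{\delta_0-\delta_1}$, and $M^{b,1\to 0}$. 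All the events $\{X_\sigma \leq N\}$ therefore share the same monotonicity pattern with respect to every Poisson process, and FKG for a product of independent Poisson point processes gives
\[
\P_p\!\left[ \bigcap_\sigma \{X_\sigma \leq N\}\right] \;\geq\; \prod_\sigma \P_p[\,X_\sigma \leq N\,] \;=\; \P_p[\,X_{+\cdots +} \leq N\,]^{2^d}.
\]
Combining (i)--(iii) and taking $2^d$-th roots yields the proposition.

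The main obstacle is verifying the claimed monotonicity of $X_\sigma$ in the background-flip Poisson processes $M^{b, 0\to 1}$ and $M^{b, 1\to 0}$. The background $B_t^{\emptyset, 0}(x)$ is constructed by the alternating ``first arrival after last flip'' rule, and it is not immediately transparent that inserting a single extra point into $M^{b, 0\to 1}(x)$ only enlarges $\{t : B_t^{\emptyset, 0}(x) = 1\}$. A short induction on the successive Poisson arrivals at $x$ confirms exactly this, and hence that the extra point can only lower the effective recovery rate at $x$ uniformly in time, which by the standard monotonicity of the graphical representation in recovery marks can only increase each $X_\sigma$; the analogous claim for $M^{b, 1\to 0}$ follows symmetrically. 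Once monotonicity is in hand, the FKG inequality for independent Poisson processes (obtained via finite-dimensional approximation from the classical Harris--FKG inequality on product spaces) completes the proof.
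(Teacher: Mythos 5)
Your argument is essentially the paper's own proof (which defers to Liggett's Proposition 2.6): split into the $2^d$ orthants, use pigeonhole, and obtain positive correlation of the events $\{X_\sigma\le N\}$ from Harris/FKG together with the monotonicity of the truncated process in the graphical ingredients --- increasing in the arrow processes $\vec{N}^j$ and in $M^{b,0\to 1}$, decreasing in $N^{\delta_1}$, $N^{\delta_0-\delta_1}$ and $M^{b,1\to 0}$ --- which is exactly the justification the paper records at the end of the proof of Lemma~\ref{paper1:lemma3} and in the remark following Proposition~\ref{paper1:prop2}. One small repair: reflections through half-integer hyperplanes do not preserve $[-n,n]^d$ or $(-L,L)^d$, so with your partition the $X_\sigma$ are not identically distributed; either reflect through the coordinate hyperplanes $x_i=0$ (the closed orthants then overlap, but covering is all the pigeonhole step needs), or observe that each of your half-integer orthants is contained in an integer-reflected copy of $[0,L)^d$, which yields $\P_p[\,X_\sigma\le N\,]\ge\P_p[\,X_{+\cdots+}\le N\,]$ --- the only inequality the argument actually uses.
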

Let 
\[
S_+(L,T)=\{\,(x,t)\in\Zd\times [0,T]:\,x_1=L \,,x_i\geq 0,\,2\leq i \leq d \,\}
\]
and define $N_{\emptyset,+}^A(L,T)$ in a similar manner as $N_\emptyset^A(L,T)$ using $S_+(L,T)$ instead of $S(L,T)$.
\begin{prop}\label{paper1:prop2}
For any $L,M \geq 1$, $T>0$ and $n < L$,
\[
\left(\P_p[\,N_{\emptyset,+}^{[-n,n]^d}(L,T)\leq M \,]\right)^{d2^d} \leq \P_p[\,N_{\emptyset}^{[-n,n]^d}(L,T)\leq Md2^d \,]
\]
\end{prop}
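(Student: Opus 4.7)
The plan is to exploit the rotational and reflection symmetries of the boundary $S(L,T)$ together with a positive correlation (Harris--FKG) argument. First, I would decompose $S(L,T)$ into $d2^d$ symmetric ``octants'', one for each choice of (i) a coordinate axis $j\in\{1,\ldots,d\}$ to pin to the face, (ii) a sign for that coordinate (so $x_j=+L$ or $x_j=-L$), and (iii) a sign convention ($\ge 0$ or $\le 0$) for each of the remaining $d-1$ coordinates. This yields $d\cdot 2\cdot 2^{d-1}=d2^d$ pieces $S_1,\ldots,S_{d2^d}$ covering $S(L,T)$, each obtained from $S_+(L,T)$ by composing an axis permutation with sign flips. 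Let $N_{\emptyset,k}^{[-n,n]^d}(L,T)$ denote the analogue of $N_{\emptyset,+}^{[-n,n]^d}(L,T)$ with $S_+$ replaced by $S_k$.

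Since the pieces cover $S(L,T)$, every point contributing to $N_\emptyset^{[-n,n]^d}(L,T)$ lies in at least one $S_k$, so
\[
N_\emptyset^{[-n,n]^d}(L,T)\;\le\;\sum_{k=1}^{d2^d} N_{\emptyset,k}^{[-n,n]^d}(L,T),
\]
whence $\{N_{\emptyset,k}^{[-n,n]^d}(L,T)\le M\text{ for all }k\}\subseteq\{N_\emptyset^{[-n,n]^d}(L,T)\le Md2^d\}$. Because $A=[-n,n]^d$ is invariant under all coordinate permutations and sign flips and the driving Poisson processes are i.i.d.\ across sites and invariant in distribution under such lattice symmetries, each $N_{\emptyset,k}^{[-n,n]^d}(L,T)$ has the same distribution as $N_{\emptyset,+}^{[-n,n]^d}(L,T)$.

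Next I would apply a Harris--FKG inequality. As observed in the proof of Lemma~\ref{paper1:lemma3}, each $N_{\emptyset,k}^{[-n,n]^d}(L,T)$ is monotone non-decreasing in the arrow processes $\vec N^j$ and in $M^{b,0\to 1}$ and monotone non-increasing in $N^{\delta_1}$, $N^{\delta_0-\delta_1}$ and $M^{b,1\to 0}$. Consequently the events $\{N_{\emptyset,k}^{[-n,n]^d}(L,T)\le M\}$ are all \emph{monotone in the same direction} with respect to each driving Poisson process. The FKG inequality for products of Poisson point processes (standard via discretisation and passage to the limit) then gives
\[
\P_p\!\left[N_{\emptyset,k}^{[-n,n]^d}(L,T)\le M\text{ for all }k\right]\;\ge\;\prod_{k=1}^{d2^d}\P_p\!\left[N_{\emptyset,k}^{[-n,n]^d}(L,T)\le M\right].
\]
Combining this with the event inclusion and the equality in law of the factors yields the asserted bound.

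The main obstacle is the legitimacy of the FKG step. One must confirm that the events really are co-monotone in each individual Poisson process so the inequality applies uniformly across the $d2^d$ factors, and carry out the (standard) discretisation that reduces Poisson-process FKG to its classical lattice version. The observation that all the $N_{\emptyset,k}$ are monotone in the same direction in each driving process, recorded already in the proof of Lemma~\ref{paper1:lemma3}, is precisely what makes this application legitimate; beyond this, the covering property of the octants and the invariance in law under the symmetry group are routine to verify.
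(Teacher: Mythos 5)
Your proof is correct and follows essentially the same route as the paper, which defers to Proposition 2.11 of Liggett (1999): decompose $S(L,T)$ into $d2^d$ congruent copies of $S_+(L,T)$, use subadditivity of the counts and equality in law under lattice symmetries, and apply Harris/FKG to the events $\{N_{\emptyset,k}\le M\}$. Your justification of the FKG step --- that each $N_{\emptyset,k}$ is monotone in the same direction in each driving Poisson process --- is exactly the positive-correlation argument the paper records at the end of the proof of Lemma~\ref{paper1:lemma3} and invokes for this proposition.
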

The proof of these propositions requires certain random variables to be positively correlated. For Proposition \ref{paper1:prop1}, let $X_1=|\CL_t^{[-n,n]^d}\cap [0,L)^d|$ and $X_2,\ldots,X_{2^d}$ be defined similarly with respect to the other orthants in $\RR^d$. The needed positive correlation of $\{X_i\}_{i=1}^{2^d}$ is justified in the same way as in the end of the proof of Lemma \ref{paper1:lemma3}. Similarly justification can be made in the proof of Proposition \ref{paper1:prop2}. 
\begin{theorem}
If $\{C_t\}$ survives at $p>0$, then it satisfies the following condition: For all $\epsilon>0$ there exist $n,L \geq 1$ and $T>0$ such that
\begin{align}
&\P_p[\,{}_{L+n}\dptn C_{T+1}^{\emptyset,[-n,n]^d}\supseteq \:x+[-n,n]^d \text{ some } x\in[0,L)^d \,]>1-\epsilon \label{paper1:fstc1} \\ 
&\P_p[\,{}_{L+2n+1}\dptn C_{t+1}^{\emptyset,[-n,n]^d}\supseteq \:x+[-n,n]^d \text{ some } 0\leq t<T,\label{paper1:fstc2} \\
&\quad\text{some } x\in\{L+n\}\times [0,L)^{d-1} \,]>1-\epsilon \nonumber
\end{align}
\end{theorem}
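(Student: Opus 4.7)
The plan is to adapt to the CPREE the strategy used to prove the finite space-time condition for the ordinary contact process in \cite{GrimBez} and \cite{Liggett99}, replacing each classical ingredient by its analogue, namely Lemma~\ref{paper1:lemma1}, Lemma~\ref{paper1:lemma2}, Lemma~\ref{paper1:lemma3}, and Propositions~\ref{paper1:prop1}, \ref{paper1:prop2}. Fix $\epsilon>0$, set $A=[-n,n]^d$, and write $q_n:=\P_p[C_t^{\emptyset,A}\neq\emptyset\ \forall t>0]$. Using Lemma~\ref{paper1:lemma1}, I would first pick $n$ so large that $1-q_n$ is smaller than a suitably small power of $\epsilon$, to be fixed in the course of the argument.

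For the first inequality, I would apply Lemma~\ref{paper1:lemma2} to choose first $T$ and then $L$ so that $\P_p[\,|\CL_T^{\emptyset,A}|\geq 2^dK\,]>1-\eta$ for some small $\eta>0$ (with the integer $K$ chosen below). Proposition~\ref{paper1:prop1} upgrades this to $\P_p[\,|\CL_T^{\emptyset,A}\cap [0,L)^d|\geq K\,]>1-\eta^{1/2^d}$. On this event, a greedy selection extracts $K'\geq K/(4n+1)^d$ infected sites in $[0,L)^d$ pairwise at $L^\infty$-distance at least $4n+1$. For each such $y$, let $E_y$ be the event that the graphical elements inside $(y+[-n,n]^d)\times [T,T+1]$ realize a pattern of infection arrows from $y$ to every other site of $y+[-n,n]^d$ together with no recovery events in $y+[-n,n]^d$; then $E_y$ forces $y+[-n,n]^d\subseteq {}_{L+n}C_{T+1}^{\emptyset,A}$, and has probability bounded below by a positive constant $p_0(n)$ depending only on $n,d,\gamma,\delta_0,\delta_1$. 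These events are independent over well-separated $y$, so the conditional probability that none occurs is at most $(1-p_0(n))^{K'}$; choosing $K$ large drives this below $\epsilon/2$ and the first inequality follows.

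For the second inequality, I would combine Lemma~\ref{paper1:lemma3} with Proposition~\ref{paper1:prop2}. Proposition~\ref{paper1:prop2} reduces making $\P_p[N^A_{\emptyset,+}(L,T)\leq M]$ small to making $\P_p[N^A_\emptyset(L,T)\leq Md2^d]$ small, and Lemma~\ref{paper1:lemma3}, applied along a subsequence $(L_j,T_j)\nearrow\infty$ and with $N$ chosen so that $\P_p[|\CL_{T_j}^{\emptyset,A}|\leq N]$ is bounded below by a positive constant, gives such a bound once $1-q_n$ is sufficiently small. Given at least $M$ reach-points $(x_j,s_j)$ on $S_+(L,T)$ with the $s_j$ separated by at least $1$, a local spreading argument at each $(x_j,s_j)$---applied to the shifted $n$-box centered at $(L+n,x_{j,2},\ldots,x_{j,d})$ during the time window $[s_j,s_j+1]$, each region contained in $(-L-2n-1,L+2n+1)^d\times [0,T+1]$ and, after passing to a further well-separated sub-collection, lying in pairwise-disjoint space-time regions---yields independent trials of success probability at least $p_0(n)$. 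Taking $M$ large makes the probability that all trials fail smaller than $\epsilon/2$ and gives the second inequality.

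The main technical hurdle will be coordinating the choices of $L$ and $T$ so that both inequalities hold simultaneously: the first requires $L$ large enough (relative to $T$) that the truncated process still supports many particles at time $T$, while the application of Lemma~\ref{paper1:lemma3} underlying the second is best exploited in a regime where $\P_p[|\CL_T^{\emptyset,A}|\leq N]$ remains bounded below by a fixed positive constant. Balancing these requirements via a careful joint choice of the subsequence $(L_j,T_j)$, together with the bookkeeping that ties $1-q_n$ to the powers $\eta^{1/2^d}$ and $\eta^{1/(d2^d)}$ that arise from Propositions~\ref{paper1:prop1} and \ref{paper1:prop2}, is where the bulk of the remaining work lies; structurally the steps are identical to those in the proof of Theorem~1.33 of \cite{Liggett99}.
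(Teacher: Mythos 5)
Your overall architecture coincides with the paper's: choose $n$ by Lemma~\ref{paper1:lemma1}, use Lemma~\ref{paper1:lemma2} and Proposition~\ref{paper1:prop1} to produce many infected sites of the truncated process in $[0,L)^d$ at time $T$, use Lemma~\ref{paper1:lemma3} and Proposition~\ref{paper1:prop2} to produce many well-separated reach-points on $S_+(L,T)$, and finish both \eqref{paper1:fstc1} and \eqref{paper1:fstc2} by attaching independent local spreading events (measurable with respect to the graphical representation after time $T$, resp.\ outside $(-L,L)^d\times[0,T]$) to a well-separated subcollection. Those soft parts are fine. The problem is that the step you explicitly postpone as ``where the bulk of the remaining work lies'' --- producing a \emph{single} pair $(L,T)$ serving both inequalities --- is exactly the one genuinely nontrivial idea in this proof, and your sketch gives no mechanism for it. The two requirements pull in opposite directions: for \eqref{paper1:fstc1} you need $\P_p[\,|\CL_T^{\emptyset,[-n,n]^d}|\le 2^dN\,]$ small (so that Proposition~\ref{paper1:prop1} gives many sites in the orthant), while for Lemma~\ref{paper1:lemma3} to force $\P_p[\,N_\emptyset^{[-n,n]^d}(L,T)\le Md2^d\,]$ to be small you need $\P_p[\,|\CL_T^{\emptyset,[-n,n]^d}|\le N\,]$ to be \emph{at least} a constant that is large compared with the extinction probability. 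Generic sequences fail on one side or the other: if $L_j$ grows fast relative to $T_j$, then $\P_p[\,|{}_{L_j}\dptn C_{T_j}^{\emptyset,[-n,n]^d}|\le N\,]$ tends to the extinction probability itself and Lemma~\ref{paper1:lemma3} becomes vacuous; if $T_j$ grows fast relative to $L_j$, the truncated process is already dead at time $T_j$ and \eqref{paper1:fstc1} is lost. Saying ``bounded below by a positive constant, once $1-q_n$ is small'' hides a circularity: the constant must be fixed, quantitatively tied to the choice of $n$, and must hold along the whole sequence $(L_j,T_j)\nearrow\infty$ with $N$ and $M$ fixed in advance.

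The paper resolves this with an intermediate-value (pinning) device that is absent from your proposal. Fix $\delta$ and choose $n$ so that the survival probability exceeds $1-\delta^{2}$; with $N$ already fixed, note that for fixed $L$ the map $t\mapsto\P_p[\,|\CL_t^{\emptyset,[-n,n]^d}|>2^dN\,]$ is continuous and tends to $0$ as $t\to\infty$, while by Lemma~\ref{paper1:lemma2} it exceeds $1-\delta$ for suitable large $L,t$; by continuity one extracts $L_j,T_j\nearrow\infty$ with $\P_p[\,|{}_{L_j}\dptn C_{T_j}^{\emptyset,[-n,n]^d}|>2^dN\,]=1-\delta$ \emph{exactly} for every $j$. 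This single normalization serves both purposes at once: Lemma~\ref{paper1:lemma3} then gives $\limsup_j\P_p[\,N_\emptyset^{[-n,n]^d}(L_j,T_j)\le Md2^d\,]\le \delta^{2}/\delta=\delta$, while Proposition~\ref{paper1:prop1} applied at the same $(L,T)=(L_j,T_j)$ gives $\P_p[\,|\CL_T^{\emptyset,[-n,n]^d}\cap[0,L)^d|>N\,]>1-\delta^{2^{-d}}$, and Proposition~\ref{paper1:prop2} converts the former into $\P_p[\,N_{\emptyset,+}^{[-n,n]^d}(L,T)>M\,]>1-\delta^{2^{-d}/d}$. Without this pinning argument (or an equivalent device making the same level simultaneously bounded away from $1$ and away from the extinction probability along a sequence going to infinity), your outline does not assemble into a proof; with it, the rest of your sketch goes through essentially as you describe.
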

\begin{proof}
Again, we will follow the steps in \cite{Liggett99} with some modifications. Let $0<\delta<1$. We will see at the end how to choose $\delta$ for a given $\epsilon>0$. Lemma \ref{paper1:lemma1} gives us an $n$ such that
\begin{equation}\label{paper1:step1}
\P_p[\,C_t^{\emptyset,[-n,n]^d}\neq\emptyset\,\forall t>0 \,]>1-\delta^2.
\end{equation}
Given $n$, choose $N^\prime$ such that
\[
\left(1-\P_p[\,{}_{n+1}\dptn C_1^{\emptyset,\{0\}}\supseteq [-n,n]^d \,]\right)^{N^\prime} < \delta
\] 
and then choose $N$ so large such that if $A\subseteq\Zd$ with $|A|\geq N$, then there exists $B\subseteq A$ with $|B|\geq N^\prime$ and 
\[
|x-y|_\infty \geq 2n+1 \quad \forall\,x,y \in B,\: x\neq y.
\]
Let $B_A$ be a fixed (deterministic) such choice for each $A$. 

In a similar fashion, choose $M^\prime$ such that
\begin{equation}\label{paper1:Mprime}
\left(1-a\right)^{M^\prime} < \delta,
\end{equation}
where
\[
\begin{split}
a=\P_p[\,&\text{There are \small$\emptyset$\normalsize-active paths from the origin to every} \\
&\text{point in } [0,2n]\times [-n,n]^{d-1}\times \{1\} \text{ that}\\
&\text{stays in } [0,2n]\times [-n,n]^{d-1}\times [0,1]\,]
\end{split}
\]
Then choose $M$ so large such that if $A\subseteq\Zd\times [0,\infty)$ is a finite set with $|A|\geq M$, where the distance in time between points with the same spatial coordinate is at least 1, then there exists $B\subseteq A$ with $|B|\geq M^\prime$ and with the property that for each pair of points $(x,s),(y,t)\in B$ we have either
\begin{equation}\label{paper1:sep}
x=y, \quad |s-t|\geq 1 \qquad \text{or} \qquad |x-y|_\infty \geq 2n+1.
\end{equation}
Let $B_A$ be a fixed (deterministic) such choice for each $A$.

From Lemma \ref{paper1:lemma2}, \eqref{paper1:step1}, the inequality $1-\delta<1-\delta^2$ and the facts that for fixed $L$, $n$ and $N$, the map $t \mapsto \P_p[\,|\CL_t^{\emptyset,[-n,n]^d}|>2^d N\,]$ is continuous and that $\lim_{t\to\infty}\P_p[\,|\CL_t^{\emptyset,[-n,n]^d}|>2^d N\,]=0$, we can conclude that there exist $L_j \nearrow \infty$ and $T_j \nearrow\infty$ so that
\[
\P_p[\,|{}_{L_j}\dptn C_{T_j}^{\emptyset,[-n,n]^d}|>2^d N \,]=1-\delta \quad \forall\, j\geq 1.
\]
Furthermore, Lemma \ref{paper1:lemma3} with $M$ and $N$ replaced by $Md2^d$ and $2^d N$ respectively and with $A=[-n,n]^d$, we get that for some $j$ 
\[
\P_p[\,N_\emptyset^{[-n,n]^d}(L_j,T_j)>Md2^d \,] > 1-\delta.
\]
Let $L=L_j$ and $T=T_j$ for that specific $j$ and apply Propositions \ref{paper1:prop1} and \ref{paper1:prop2} to get
\begin{align}\label{paper1:Cplus}
\P_p[\,|\CL_T^{\emptyset,[-n,n]^d}\cap[0,L)^d |> N\,]& >1-\delta^{2^{-d}} \\ \label{paper1:Nplus}
\P_p[\,N_{\emptyset,+}^{[-n,n]^d}(L,T)>M \,]& >1-\delta^{2^{-d}/d}.
\end{align}
To obtain \eqref{paper1:fstc1}, define for $B\subseteq\Zd$ and $T>0$
\[
\begin{split}
V_B^{T}&=\{\,\exists\, (x,t)\in B\times\{T\}\text{ such that there are \small$\emptyset$\normalsize-active paths from }\\
&\qquad(x,t)\text{ to every }(y,s)\in \left(x+[-n,n]^d \right)\times\{T+1\} \\ 
&\qquad\text{ that stays in } \left(x+[-n,n]^d \right)\times (T,T+1] \,\},
\end{split}
\]
and note that
\begin{equation}\label{paper1:incl}
\begin{split}
&\bigcup_{A\subseteq [0,L)^d} \{\,|\CL_T^{\emptyset,[-n,n]^d}\cap[0,L)^d |> N,\,\CL_T^{\emptyset,[-n,n]^d}\cap[0,L)^d=A,\,V_{B_A}^{T} \,\} \\
&\qquad\subseteq\{\,{}_{L+n}\dptn C_{T+1}^{\emptyset,[-n,n]^d}\supseteq \, x+[-n,n]^d \text{ some } x\in [0,L)^d\,\}. 
\end{split}
\end{equation}
Let $\mathcal F_T$ be the $\sigma$-algebra generated by $M^{b,0\to 1}$, $M^{b,1\to 0}$, $N^{\delta_1}$, $N^{\delta_0-\delta_1}$, and $\vec{N}^j$, $j\in \{\,\pm e_1,\ldots,\pm e_d \,\}$ up to time $T$ and note that for given $A\subseteq [0,L)^d$ with $|A|\geq N$, $V_{B_A}^T$ is independent of $\mathcal F_T$ so
\[
\begin{split}
&\P_p[\,V_{B_A}^{T}\,|\,\mathcal F_T \,]=\P_p[\,V_{B_A}^{T}\,] \\
&\quad\geq 1-\left(1-\P_p[\,{}_{n+1}\dptn C_1^{\emptyset,\{0\}}\supseteq [-n,n]^d \,]\right)^{N^\prime}>1-\delta.
\end{split}
\]
By summing up over $A\subseteq [0,L)^d$ and using \eqref{paper1:Cplus} and \eqref{paper1:incl}, we get
\[
\begin{split}
&\P_p[\,{}_{L+n}\dptn C_{T+1}^{\emptyset,[-n,n]^d}\supseteq \, x+[-n,n]^d \text{ some } x\in [0,L)^d \,] \\
&\quad > (1-\delta)(1-\delta^{2^{-d}}).
\end{split}
\]
This yields \eqref{paper1:fstc1} when $\delta$ is chosen appropriately.

To obtain \eqref{paper1:fstc2}, define for each space-time point $(x_i,t_i)$ we count in \small$N_{\emptyset,+}^{[-n,n]^d}(L,T)$ \normalsize a variable $\tilde{Y}_i$ which is 1 if $(x_i,t_i)$ infects all points in 
\[
\left(x_i+[0,2n]\times [-n,n]^{d-1}\right)\times\{t_i+1\}
\]
using \small$\emptyset$\normalsize-active paths in 
\[
\left(x_i+[0,2n]\times [-n,n]^{d-1}\right)\times (t_i,t_i+1]
\]
only and 0 otherwise. If $N_{\emptyset,+}^{[-n,n]^d}(L,T)>M$, we can choose $M^\prime$ space-time points satisfying \eqref{paper1:sep}. Denote the corresponding variables by $Y_i$, $i=1,\ldots,M^{\prime}$. Let $\mathcal F_{L,T}$ be as in the proof of Lemma \ref{paper1:lemma3} and note that conditioned on $\mathcal F_{L,T}$ restricted to the event $\{\,N_{\emptyset,+}^{[-n,n]^d}(L,T)>M\,\}$, the $M^\prime$ space-time points are specified and $Y_1,Y_2,\ldots,Y_{M^\prime}$ are independent with the (conditional) probability of $Y_i=1$ equal to $a$. This implies that
\[
\begin{split}
&\P_p[\,Y_i=1 \text{ some } i=1,\ldots,M^\prime\,|\,\mathcal F_{L,T} \,]= 1-\left(1-a\right)^{M^\prime} \\
&\qquad\text{on}\quad \{\,N_{\emptyset,+}^{[-n,n]^d}(L,T)>M \,\},
\end{split}
\]
which together with \eqref{paper1:Mprime} and \eqref{paper1:Nplus} yields
\[
\begin{split}
&\P_p[\,{}_{L+2n+1}\dptn C_{t+1}^{\emptyset,[-n,n]^d}\supseteq \:x+[-n,n]^d \text{ some } 0\leq t<T,\\
&\qquad\text{some } x\in\{L+n\}\times [0,L)^{d-1} \,] \\
&\qquad> (1-\delta)(1-\delta^{2^{-d}/d}).
\end{split}
\]
This gives \eqref{paper1:fstc2} when $\delta$ is chosen appropriately.
\end{proof} 
The next part of the program is to carry out a comparison with oriented percolation. For this, we start to combine \eqref{paper1:fstc1} and \eqref{paper1:fstc2} into one. 
\begin{lemma}\label{paper1:l1}
If $\{C_t\}$ survives at $p>0$, then it satisfies the following condition: For all $\epsilon>0$ there exist $n,L \geq 1$ and $T>0$ such that
\begin{equation}\label{paper1:fstc3}
\begin{split}
&\P_p[\, {}_{2L+3n}\dptn C_t^{\emptyset,[-n,n]^d} \supseteq \:x+[-n,n]^d \text{ some } T\leq t<2T, \\
&\qquad\text{some } x\in [L+n,2L+n]\times [0,2L)^{d-1} \,]>1-\epsilon 
\end{split}
\end{equation}
\end{lemma}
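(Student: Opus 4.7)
\textbf{Proof proposal for Lemma~\ref{paper1:l1}.} The plan is to concatenate the two conclusions of the preceding theorem: first use \eqref{paper1:fstc1} to produce, at time $T_0+1$, a fully infected translate of $[-n,n]^d$ inside $[0,L)^d$, and then restart from that translate to invoke \eqref{paper1:fstc2} and reach, after a further time in $[1,T_0+1)$, a fully infected translate of $[-n,n]^d$ on the outer face of a doubled box.

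Concretely, I would apply the preceding theorem with $\epsilon/2$ in place of $\epsilon$ to obtain $n$, $L$, and $T_0$ for which both \eqref{paper1:fstc1} and \eqref{paper1:fstc2} hold with $1-\epsilon/2$. Let $E_1$ denote the event in \eqref{paper1:fstc1}. On $E_1$, select measurably (via any fixed deterministic rule) a site $X_1 \in [0,L)^d$ with $X_1 + [-n,n]^d \subseteq {}_{L+n}\dptn C_{T_0+1}^{\emptyset,[-n,n]^d}$. Let $\mathcal G$ be the $\sigma$-algebra generated by the graphical representation on $[0, T_0+1]$, so that $E_1$, $X_1$, and $B_{T_0+1}^{\emptyset}$ are all $\mathcal G$-measurable while the Poisson arrivals after time $T_0+1$ are independent of $\mathcal G$. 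Let $E_2$ denote the event that, in the evolution after time $T_0+1$ started from $(B_{T_0+1}^{\emptyset}, X_1 + [-n,n]^d)$, there exist $t \in [0,T_0)$ and $X_2 \in \{L+n\} \times [0,L)^{d-1}$ such that $X_1 + X_2 + [-n,n]^d$ is fully infected at time $T_0+2+t$ by active paths confined to $X_1 + (-(L+2n+1), L+2n+1)^d$ throughout $(T_0+1, T_0+2+t]$.

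The main step, and the principal subtlety, is to establish that $\P_p[E_2 \mid \mathcal G] \geq 1-\epsilon/2$ on $E_1$. By the Markov property for the graphical representation, the post-$(T_0+1)$ evolution is, conditional on $\mathcal G$, a fresh copy of the CPREE starting from $(B_{T_0+1}^{\emptyset}, X_1 + [-n,n]^d)$. The target event is increasing in the initial pair $(\beta, \eta)$, since more $1$'s in $\beta$ merely suppresses extra $N^{\delta_0-\delta_1}$-recoveries and more $1$'s in $\eta$ only enlarges the family of active paths. Attractivity (Proposition~\ref{paper1:attr}) therefore gives a lower bound by replacing $B_{T_0+1}^{\emptyset}$ with the all-zero background, and spatial translation invariance identifies the resulting lower bound with the $\P_p$-probability of the event in \eqref{paper1:fstc2}, which is at least $1-\epsilon/2$. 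The main obstacle is checking that this monotonicity goes in the correct direction, since the actual background $B_{T_0+1}^{\emptyset}$ at the restart time is random and generally nontrivial; everything else is essentially bookkeeping.

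Combining, $\P_p(E_1 \cap E_2) \geq (1-\epsilon/2)^2 \geq 1-\epsilon$. On $E_1 \cap E_2$, the active paths lie in $(-(L+n), L+n)^d$ on $[0, T_0+1]$ and in $X_1 + (-(L+2n+1), L+2n+1)^d$ on $(T_0+1, T_0+2+t]$, both contained in $(-(2L+3n), 2L+3n)^d$ because $X_1 \in [0,L)^d$. The filled translate $(X_1 + X_2) + [-n,n]^d$ satisfies $X_1 + X_2 \in [L+n, 2L+n) \times [0,2L)^{d-1}$, and its time of appearance lies in $[T_0+2, 2T_0+2)$. Setting the new $T := T_0+1$ makes $[T_0+2, 2T_0+2) \subseteq [T, 2T)$, and \eqref{paper1:fstc3} follows.
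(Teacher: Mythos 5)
Your argument is correct and is essentially the paper's proof: a two-stage restart using the Markov property of the graphical representation, monotonicity of the target events in the background configuration (so that restarting from the actual background $B_{T_0+1}^{\emptyset}\geq\emptyset$ only helps, since the events in \eqref{paper1:fstc1}--\eqref{paper1:fstc2} are stated for the minimal, all-zero background), and translation invariance. The only difference is that you apply \eqref{paper1:fstc1} first and \eqref{paper1:fstc2} second, whereas the paper restarts at the first point realizing \eqref{paper1:fstc2} and then applies \eqref{paper1:fstc1}; the spatial and temporal bookkeeping works out the same either way.
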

\begin{proof}
We follow Proposition 2.20 in \cite{Liggett99}. Let $(x,\tau)$ be the first (in time) space-time point with the property appearing in the probability \eqref{paper1:fstc2}, where $x$ is choosen according to some deterministic ordering of $\Zd$ and restart $(B_t,C_t)$ at time $\tau+1$. From \eqref{paper1:fstc1}, \eqref{paper1:fstc2} and the fact that these probabilities are increasing in the background process, it follows that
\[
\begin{split}
&\P_p[\, {}_{2L+3n}\dptn C_t^{\emptyset,[-n,n]^d} \supseteq \:x+[-n,n]^d \text{ some } T+1\leq t<2T+2, \\
&\qquad\text{some } x\in [L+n,2L+n]\times [0,2L)^{d-1} \,]>(1-\epsilon)^2. 
\end{split}
\]
Replace $T+1$ with $T$ and the proof is complete.
\end{proof} 
Now we are ready for the fundamental step in the construction towards the comparison. 
\begin{lemma}
Assume $\{C_t\}$ survives at $p>0$ and fix $\epsilon>0$. Then there exist $\delta>0$, $n,a,b$ with $n<a$ such that for all $(x,t) \in [-a,a]^d \times [0,b]$ 
\[
\begin{split}
&\P_{p-\delta} [\,\exists\, (y,s)\in [a,3a]\times [-a,a]^{d-1}\times [5b,6b]\text{ such that} \\
&\qquad\text{there are \small$\emptyset$\normalsize-active paths from } (x,t)+\left([-n,n]^d\times\{0\}\right) \\
&\qquad\text{to every point in }(y,s)+ \left([-n,n]^d\times\{0\} \right) \\ 
&\qquad\text{that stays in } [-5a,5a]^d \times [0,6b] \,]>1-\epsilon.
\end{split}
\]
\end{lemma}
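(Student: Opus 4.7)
The strategy is to iterate the block-to-block restart estimate of Lemma~\ref{paper1:l1} a bounded number of times, thereby extending its single-step transition into one that covers the whole region $[-a,a]^d\times[0,b]\to[a,3a]\times[-a,a]^{d-1}\times[5b,6b]$, and then to invoke a continuity argument to pass from $p$ to $p-\delta$.

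First I would apply Lemma~\ref{paper1:l1} with some small $\epsilon'>0$ (to be chosen later) to produce parameters $n$, $L$, $T$ such that \eqref{paper1:fstc3} holds with probability exceeding $1-\epsilon'$. By the symmetry of the graphical representation under coordinate reflections, the analogous estimate holds with the displacement region $[L+n,2L+n]\times[0,2L)^{d-1}$ replaced by any of its $2^d$ reflections. I would then take $a$ to be a suitable integer multiple of $L+n$ and $b$ a suitable multiple of $T$, chosen so that: from any $x\in[-a,a]^d$, some bounded number $K$ of consecutive applications of Lemma~\ref{paper1:l1}, with the reflection at each step chosen to steer the trajectory, can be arranged to end with first coordinate in $[a,3a]$ and remaining coordinates in $[-a,a]^{d-1}$; the corresponding total time, which lies in $[KT,2KT]$, falls inside $[4b,6b]$ so that the arrival time $s$ lies in $[5b,6b]$; and the ambient box $[-5a,5a]^d\times[0,6b]$ is large enough to contain each intermediate restart box of side $2L+3n$ around every visited site.

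For the iteration itself I would use the strong Markov property. Starting at $(x,t)$ with fresh background $B^{\emptyset,t}$, Lemma~\ref{paper1:l1} gives a random first restart block around some $(y_1,s_1)$. Conditioning on the $\sigma$-algebra generated up to $s_1$ and then restarting with a fresh $\emptyset$-background $B^{\emptyset,s_1}$, a second application of Lemma~\ref{paper1:l1} succeeds again with conditional probability at least $1-\epsilon'$. The key observation that legitimizes concatenating these restart pieces into a single $\emptyset$-active path with respect to the original $B^{\emptyset,t}$ is that under the monotone coupling through the shared flip Poisson processes $M^{b,0\to 1}$ and $M^{b,1\to 0}$ one has $B^{\emptyset,s_1}\le B^{\emptyset,t}$ pointwise on $[s_1,\infty)$, whence $X^{\emptyset,t}\subseteq X^{\emptyset,s_1}$ on that interval, so any $\emptyset$-active path for $B^{\emptyset,s_1}$ is automatically $\emptyset$-active for $B^{\emptyset,t}$. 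Iterating $K$ times, the compound event has probability at least $(1-\epsilon')^K>1-\epsilon/2$ once $\epsilon'$ is taken small enough.

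Finally, the compound event depends only on the Poisson processes $M^{b,0\to 1}$, $M^{b,1\to 0}$, $N^{\delta_1}$, $N^{\delta_0-\delta_1}$ and the arrow processes $\vec N^{\pm e_j}$ inside the finite box $[-5a,5a]^d\times[0,6b]$. Changing $p$ to $p-\delta$ only alters the rates of $M^{b,0\to 1}$ and $M^{b,1\to 0}$, and a standard coupling (analogous to the one used in the proof of Lemma~\ref{paper1:lemma1} and in the lemma preceding it, where $\pi_{p-\epsilon}$ was compared to $\pi_p$) shows that for $\delta$ small enough the two versions can be made to agree on this finite region with probability exceeding $1-\epsilon/2$, which gives the claim. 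The main obstacle will be the geometric bookkeeping in the second step: designing $a$, $b$, $K$ and the direction of each reflection so that, uniformly over starting points $(x,t)\in[-a,a]^d\times[0,b]$, the steered trajectory lands in the precise target region $[a,3a]\times[-a,a]^{d-1}\times[5b,6b]$ without escaping $[-5a,5a]^d\times[0,6b]$ or spoiling the probability estimates.
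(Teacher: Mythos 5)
Your proposal matches the paper's argument: the paper likewise obtains the statement at $p$ by iterating Lemma~\ref{paper1:l1} (and its reflections) a bounded number of times to steer into the target block, restarting with an all-zero background and using monotonicity in the background process exactly as in your coupling observation $X^{\emptyset,t}\subseteq X^{\emptyset,s_1}$, and then gets $p-\delta$ from the fact that the event depends only on the graphical representation in the finite box $[-5a,5a]^d\times[0,6b]$ and is therefore continuous in $p$. The geometric bookkeeping you defer is also deferred by the paper, which refers to Proposition 2.22 of Liggett (1999) for that part.
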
 
\begin{proof}
One can proceed exactly as in Proposition 2.22 in \cite[p.~52-53]{Liggett99} to first obtain the statement with $p-\delta$ replaced by $p$ and therefore we only outline this part of the argument. The main idea is to use Lemma~\ref{paper1:l1} (or a ``reflected'' version of it) repeatedly (between $4$ to $10$ times) to steer things properly so that the desired event occurs. The existence of $\delta>0$ is a consequence of the fact that the event in question depends only on the graphical representation in $[-5a,5a]^d\times [0,6b]$ and hence is continuous in $p$. 
\end{proof} 
Repeated use of the previous lemma together with appropriate stopping times and monotonicity in the background process yields:
\begin{lemma}\label{paper1:ren}
Assume $\{C_t\}$ survives at $p>0$ and let $\epsilon>0$ and $k \geq 1$ be fixed. Then there exist $\delta>0$, $n,a,b$ with $n<a$ such that the following holds: For all $(x,t) \in [-a,a]^d \times [0,b]$, with $\P_{p-\delta}$-probability at least $1-\epsilon$, there exists a translate $(y,s)+[-n,n]^d\times\{0\}$ of $[-n,n]^d\times\{0\}$ such that 
\begin{align*}
\text{a) } &(y,s)\in \left([-a,a]+2ka\right)\times [-a,a]^{d-1}\times [5kb,(5k+1)b] \\
\text{b) } &\text{There are \small$\emptyset$\normalsize-active paths from } (x,t)+[-n,n]^d\times\{0\} \text{ to every} \\ 
&\text{point in }(y,s)+[-n,n]^d\times\{0\} \text{ that stays in the region} \\
&\mathcal A =\bigcup_{j=0}^{k-1} \left([-5a,5a]+2ja\right)\times [-5a,5a]^{d-1} \times\left([0,6b]+5jb\right).
\end{align*}
\end{lemma}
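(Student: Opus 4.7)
The plan is to iterate the previous lemma $k$ times, using the strong Markov property of the graphical representation together with monotonicity of paths in the initial background in order to glue the single-step conclusions into one long statement. First, I apply the previous lemma with $\epsilon$ replaced by $\epsilon/k$, producing $\delta>0$ and $n,a,b$ with $n<a$ for which the single-step conclusion holds with error probability at most $\epsilon/k$.

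Next, set $(y_0,s_0):=(x,t)$ and build random space-time points $(y_j,s_j)$ by induction. Suppose inductively that $(y_j,s_j)\in ([-a,a]+2ja)\times [-a,a]^{d-1}\times [5jb,5jb+b]$. By the space-time translation invariance of the graphical representation, the single-step lemma applied at $(y_j,s_j)$ with the background restarted from $\emptyset$ at time $s_j$ says that with probability at least $1-\epsilon/k$ there is a translate $(y,s)+[-n,n]^d\times\{0\}$ with $(y,s)$ lying in $([a,3a]+2ja)\times [-a,a]^{d-1}\times [5jb+5b,5jb+6b]=([-a,a]+2(j+1)a)\times [-a,a]^{d-1}\times [5(j+1)b,5(j+1)b+b]$, carrying restarted-$\emptyset$-active paths from $(y_j,s_j)+[-n,n]^d\times\{0\}$ to every point of $(y,s)+[-n,n]^d\times\{0\}$ that stay inside the $j$-th piece $([-5a,5a]+2ja)\times [-5a,5a]^{d-1}\times [5jb,5jb+6b]$ of $\mathcal A$. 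Fix any deterministic selection rule (for instance, lexicographically smallest $y$ and then smallest $s$ after rounding times to a fine grid) and let $(y_{j+1},s_{j+1})$ be the point it selects on the good event. Since both the good event at step $j+1$ and the selected point are measurable with respect to the Poisson data in $[s_j,\infty)$, they are independent of the $\sigma$-algebra generated by the history up to time $s_j$. A union bound over $j=0,\ldots,k-1$ then gives probability at least $1-\epsilon$ that all $k$ steps succeed.

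The remaining task is to concatenate the restarted sub-paths into a bona fide $\emptyset$-active path from $(x,t)$. Write $\beta_j:=B^{\emptyset,t}_{s_j}$ for the value at time $s_j$ of the background started from $\emptyset$ at time $t$. By the graphical construction, $B^{\emptyset,t}$ restricted to $[s_j,\infty)$ coincides with $B^{\beta_j,s_j}$; moreover, since $\emptyset\leq\beta_j$ pointwise and the construction is monotone in the initial condition, $B^{\emptyset,s_j}_u\leq B^{\beta_j,s_j}_u$ for all $u\geq s_j$, so $X^{\beta_j,s_j}\subseteq X^{\emptyset,s_j}$ (a larger background produces fewer potential recovery points). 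Therefore every restarted-$\emptyset$-active path from $(y_j,s_j)$ is automatically $\beta_j$-active, hence a valid continuation of the $\emptyset$-active path already built up to time $s_j$. Gluing the $k$ sub-paths yields $\emptyset$-active paths from $(x,t)+[-n,n]^d\times\{0\}$ to every point of $(y_k,s_k)+[-n,n]^d\times\{0\}$ contained inside $\mathcal A$, which is the required conclusion.

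The step demanding the most care is the concatenation argument: arranging the strong Markov/stopping-time structure for the $k$ successive restarts and verifying, via the monotonicity $X^{\beta,s}\subseteq X^{\emptyset,s}$, that $\emptyset$-started sub-paths remain active under the ambient background. Once that observation is in place, the iteration and the union bound are routine.
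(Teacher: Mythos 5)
Your proof is correct and follows exactly the route the paper indicates: the paper gives only the one-line remark that the lemma follows from ``repeated use of the previous lemma together with appropriate stopping times and monotonicity in the background process,'' and your argument is precisely that, fleshed out. In particular you correctly isolate the key monotonicity point, namely that $\emptyset\leq\beta_j$ forces $X^{\beta_j,s_j}\subseteq X^{\emptyset,s_j}$, so that each restarted $\emptyset$-active sub-path is active for the ambient background and the $k$ sub-paths concatenate into a genuine $\emptyset$-active path.
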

\begin{figure}[h]
\begin{center}
\includegraphics[scale=0.55]{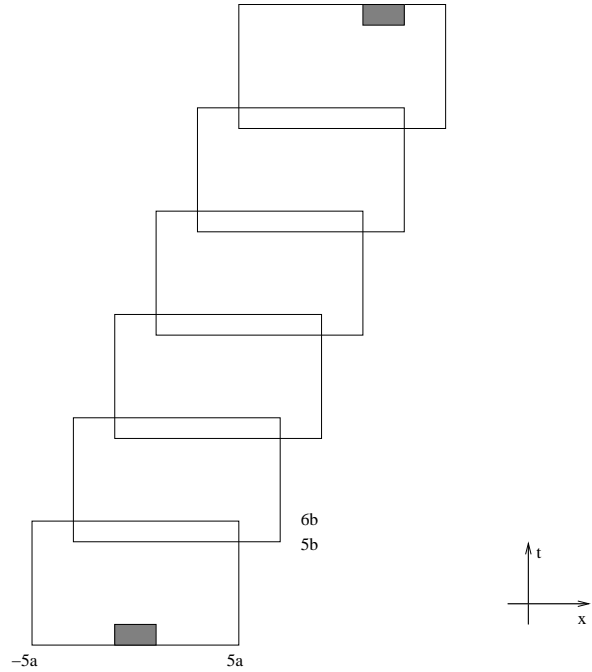}
\caption{The set $\mathcal A$.}
\label{paper1:block1}
\end{center}
\end{figure}
Our final step towards \eqref{paper1:goal} is to use the previous lemma in a so called renormalization argument. The set $\mathcal A$ from Lemma \ref{paper1:ren} (see Figure~\ref{paper1:block1}) and its reflection with respect to the $t$-axis will consist of our building blocks. Given the conditions in Lemma~\ref{paper1:ren}, the distance c in Figure~\ref{paper1:block2} is well defined. (Define it to be zero if the dashed vertical line is to the right of the left corner of the rectangle $R$, see Figure~\ref{paper1:block2}.) It is easy to see that, if we choose $k>5$, $c$ will be bigger than $3a$, independent of the value of $a$. Fix such a $k$.
\begin{figure}[h]
\begin{center}
\includegraphics[scale=0.6]{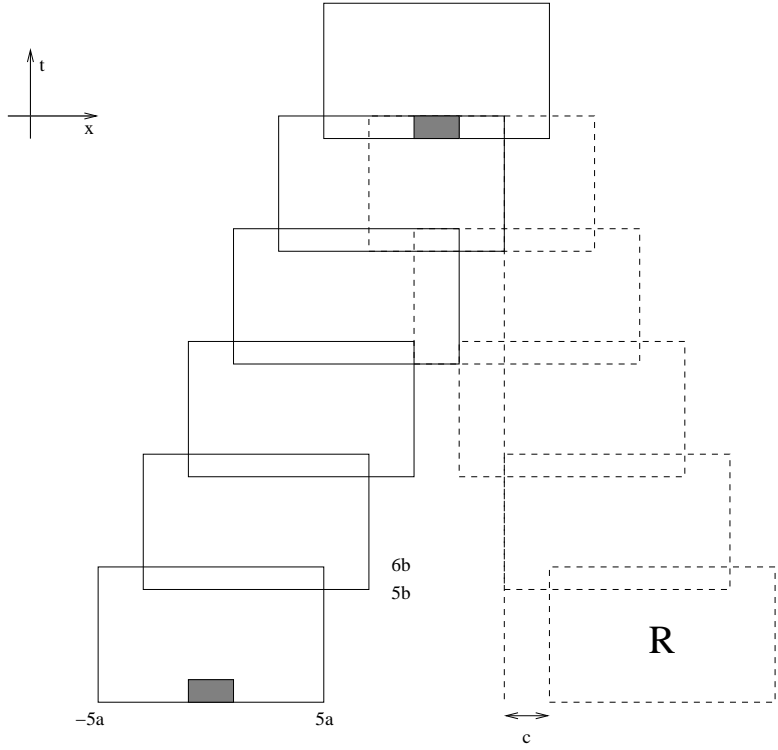}
\caption{The definition of $c$.}
\label{paper1:block2}
\end{center}
\end{figure}
\begin{figure}[h]
\begin{center}
\includegraphics[scale=0.5]{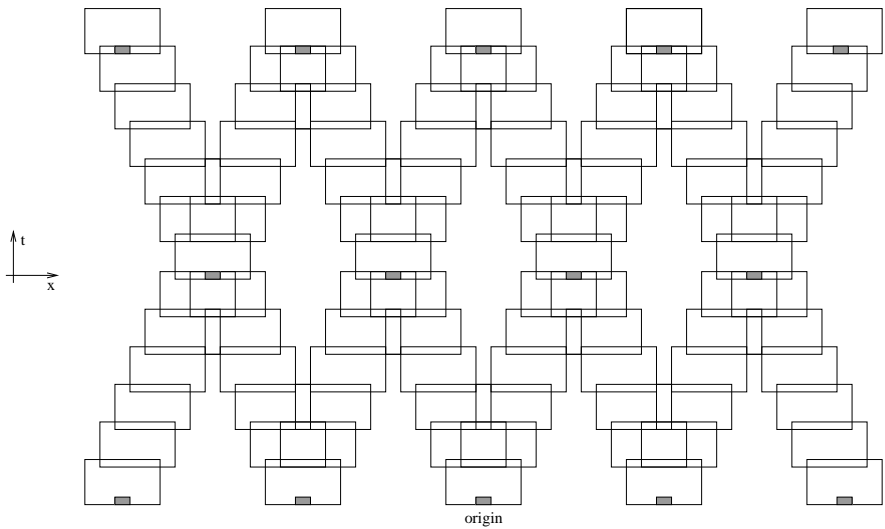}
\caption{Our building block $\mathcal A$ together with its reflection are translated in the $x_1$ and $t$ direction. The shaded regions indicate where the paths start and stop in the definition of $Z_n$.}
\label{paper1:block3}
\end{center}
\end{figure}
\begin{theorem}
If $\{C_t\}$ survives at $p>0$, then there are integers $n$,$a$ and $\delta>0$ such that 
\[
\P_{p-\delta}[\,C_t^{\emptyset,[-n,n]^d}\text{ survives in }\Z\times [-5a,5a]^{d-1}\times [0,\infty) \,]>0
\]
\end{theorem}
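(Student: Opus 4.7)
The plan is to finish the Bezuidenhout--Grimmett renormalization, using Lemma~\ref{paper1:ren} (both its ``$+x_1$'' form and its reflection in the $x_1$-axis) as the building block of a block-percolation comparison. Fix $k>5$ so that the distance $c$ of Figure~\ref{paper1:block2} exceeds $3a$ for every admissible $a$, as observed in the paragraph preceding the theorem; this ensures that the ending $[-n,n]^d$-translate produced by one application of Lemma~\ref{paper1:ren} lies inside the starting region for the next application, so consecutive blocks glue. Then choose $\epsilon>0$ small enough that any family of $\{0,1\}$-valued random variables indexed by $\Z\times\Z_{\ge 0}$, each with marginal probability $\ge 1-\epsilon$ and with uniformly bounded range of dependence, stochastically dominates a supercritical 2-dimensional oriented site percolation on the even sublattice. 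This is a standard application of the Liggett--Schonmann--Stacey theorem combined with the strict inequality $p_c^{\mathrm{OP}}<1$. Apply Lemma~\ref{paper1:ren} with these $\epsilon$ and $k$ to produce $\delta>0$ and integers $n,a,b$ with $n<a$.

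Next I would set up the renormalized lattice. To each site $(m,j)$ in the even sublattice of $\Z\times\Z_{\ge 0}$ associate a ``seed'' block which is a translate of $[-n,n]^d\times\{0\}$ placed at $x_1$-coordinate $\sim 2k a m$ and time $\sim 5 j b$ (with the transverse coordinates zero). Define $Z_{m,j}=1$ precisely when both the ``$+x_1$'' and ``$-x_1$'' events from Lemma~\ref{paper1:ren} succeed starting from this seed, producing a seed for $(m+1,j+1)$ and one for $(m-1,j+1)$ respectively via $\emptyset$-active paths contained in translates of $\mathcal{A}$ and of its reflection. A union bound gives $\P_{p-\delta}[Z_{m,j}=1]\ge 1-2\epsilon$ (shrink $\epsilon$ further if needed), and by construction $Z_{m,j}$ is measurable with respect to the graphical representation on a bounded space-time region, so the family has uniformly bounded range of dependence. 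Liggett--Schonmann--Stacey then gives the desired domination of $\{Z_{m,j}\}$ by supercritical oriented site percolation.

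On the positive-probability event that the resulting oriented percolation has an infinite open cluster at the origin, following any infinite oriented path in that cluster produces an actual infinite chain of $\emptyset$-active paths in the CPREE starting from $[-n,n]^d\times\{0\}$. All these paths live inside the union of the relevant translates of the building block $\mathcal{A}$ (and its reflection), and since each translated block extends only $\pm 5a$ in the transverse spatial directions while the translations act only in $x_1$ and in $t$, the entire realization stays inside $\Z\times [-5a,5a]^{d-1}\times [0,\infty)$. Thus with $\P_{p-\delta}$-positive probability the process $C_t^{\emptyset,[-n,n]^d}$ survives inside this tube, which is exactly~\eqref{paper1:goal}; as already noted, this immediately yields $\P_{p_c}[\,C_t^{\emptyset,\{0\}}\neq\emptyset\ \forall t\geq 0\,]=0$ when $p_c\in(0,1]$.

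The main obstacle is the bookkeeping required for gluing consecutive blocks: one must check that the ending $[-n,n]^d$-translate delivered by one application of Lemma~\ref{paper1:ren} really lies in the ``restart region'' of the next application (this is where $c>3a$ is used, and where the random stopping times implicit in Lemma~\ref{paper1:ren} must be handled), and simultaneously that the event $\{Z_{m,j}=1\}$ can be arranged to depend only on a bounded region of the graphical representation regardless of those stopping times, so that Liggett--Schonmann--Stacey applies. Apart from this combinatorial accounting, everything is parallel to the contact process argument in \cite{Liggett99}, with the single new ingredient being that Lemma~\ref{paper1:ren} already supplies the ``all zeros in the background'' starts needed for the comparison to be monotone in $p$.
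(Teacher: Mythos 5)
Your proposal is the same renormalization the paper carries out: use Lemma~\ref{paper1:ren} and its reflection as building blocks, exploit $k>5$ (so $c>3a$ as in Figure~\ref{paper1:block2}) to glue consecutive blocks and keep distinct columns from interacting, and then compare the induced block process with supercritical oriented percolation via Liggett--Schonmann--Stacey \cite{Domination}, concluding survival inside the tube $\Z\times[-5a,5a]^{d-1}\times[0,\infty)$. The only substantive differences are cosmetic (you demand both the $+x_1$ and $-x_1$ events at each renormalized site and pay a union bound $1-2\epsilon$, whereas the paper's process $Z_n(i)=(X_n(i),Y_n(i))$ asks for one directed event per parent), so this is essentially the paper's proof.

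One point, which you yourself flag as the ``main obstacle,'' is stated incorrectly as written: the claim that ``$Z_{m,j}$ is measurable with respect to the graphical representation on a bounded space-time region, so the family has uniformly bounded range of dependence'' is not true for the full two-dimensional field, because the event at $(m,j)$ is defined relative to the \emph{random} seed delivered by the previous row, and that seed's location is a function of the entire chain of blocks below; dependence therefore propagates upward, and you cannot apply LSS to the whole field as a finitely dependent family. The paper's fix is exactly to keep track of the seeds $Y_n(i)$, to observe that Lemma~\ref{paper1:ren} is uniform over starting points $(x,t)\in[-a,a]^d\times[0,b]$ and that the relevant event depends only on the graphical representation in a deterministic block, so that conditioned on $\mathcal F_n$ one has $\P_{p-\delta}[\,X_{n+1}(i)=1\,|\,\mathcal F_n\,]>1-\eta$ on $\{X_n(i-1)=1 \text{ or } X_n(i)=1\}$ and the row $\{X_{n+1}(i)\}_i$ is one-dependent given $\mathcal F_n$; LSS is then applied row by row with the explicit choice $1-\eta>1-(1-\sqrt{p})^{3}$ to dominate a supercritical oriented percolation. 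With your construction rephrased in this conditional, row-by-row form (and a deterministic rule, as in the paper, for selecting the seed when a site has two successful parents), the argument goes through.
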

\begin{proof}
The proof is a modification of Lemma 21 of \cite{GrimBez}. Let $\eta>0$ be given and take $\epsilon>0$ such that $1-\epsilon>1-\eta$ and let $n$, $a$, $b$ and $\delta$ be as in Lemma~\ref{paper1:ren}. We will make an appropriate choice of $\eta$ later. Construct a process $Z_n(i)=\left(X_n(i),Y_n(i)\right)$, $i\geq 0$, $n\geq 0$, where $X_n(i)\in \{0,1\}$ and $Y_n(i)$ is a point in $\Zd\times[0,\infty)$. $Y_n(i)$ will be undefined when $X_n(i)=0$. Start with $Z_0(0)=(1,0)$, $X_0(i)=0$, $i\neq 0$ and define inductively as follows: With $Z_k(i)$ already defined for $i\geq 0$, $0\leq k\leq n$ let $X_{n+1}(i)=1$ if for either $j=i$ or $j=i-1$ it is the case that $X_n(j)=1$ and there is a translation of $[-n,n]^d$ to the shaded area (see Figure~\ref{paper1:block3} for the shaded regions) on the top of the corresponding block such that $Y_n(j)+[-n,n]^d$ is connected with \small$\emptyset$\normalsize-active paths to every point in that translation. Furthermore, define $Y_{n+1}(i)=(x_{n+1}(i),t_{n+1}(i))$, where $t_{n+1}(i)$ is the earliest center of such a translation and $x_{n+1}(i)$ is chosen according to some fixed ordering of $\Zd$. Note that if $X_n(i)=1$ for infinitely many pairs $(i,n)$, then $C_t^{\emptyset,[-n,n]^d}\text{ survives in }\Z\times [-5a,5a]^{d-1}\times [0,\infty)$ so it remains to prove that the former has positive probability. Let $\mathcal F_n$ be the $\sigma$-algebra generating by $Z_k(i)$, where $i\geq 0$, $0\leq k\leq n$ and note that from Lemma~\ref{paper1:ren} we get
\[
\P_p[\,X_{n+1}(i)=1\,|\,\mathcal F_n \,] > 1-\eta \quad\text{on}\quad \{\,X_n(i-1)=1 \text{ or } X_n(i)=1\,\}.
\]
Also, our choice of $k$ and the fact that events that depend on disjoint parts of the graphical representation are independent, we have that, conditioned on $\mathcal F_n$, the collection of variables $\{\,X_{n+1}(i):\: i\geq 0\,\}$ is one-dependent. Now, we are ready to make the construction above for a specific choice of $\eta$. Take $1/4 \leq p<1$ so large that an oriented percolation process, $\{A_n\}$, on $\N$ with parameter $p$ survives with positive probability when it starts with a single infection at the origin and choose $\eta$ such that $1-\eta > 1-(1-\sqrt{p})^{3}$. A result of Liggett, Schonmann and Stacey \cite{Domination} (see also Theorem B26 \cite{Liggett99}) tells us that a one-dependent process with density $1-\eta$ stochastically dominates a product measure with density $p$ on $\N$. We can then conclude that $\{X_n\}$ dominates $\{A_n\}$. This completes the proof.
\end{proof}    
\noindent 
We end with the following question:

\medskip

\noindent 
Does the process obey a complete convergence theorem, i.e.$\dptn$ is it the case that for all $p\in [0,1]$ and $\beta$, $\eta\in\{0,1\}^{\Zd}$
\[
(\delta_\beta\times\delta_\eta)S_p(t)\to \alpha_p(\beta,\eta)\bar{\nu}_p+(1-\alpha_p(\beta,\eta))\pi_p\times \delta_{\emptyset}\quad \text{as $t\to\infty$},
\]
where 
\[
\alpha_p(\beta,\eta)=\P_p[\,C_t^{\beta,\eta}\neq\emptyset\,\forall t\geq 0\,]. 
\]
\noindent
Contemporaneously and independently of our work, Remenik \cite{Rem} has proved a complete convergence theorem for the special variant when $\delta_0=\infty$. We strongly believe that a complete convergence theorem also holds in our case and plan to pursue some ideas that we have.

\section*{Acknowledgement}

The authors want to thank Olle Häggström for a careful reading of the manuscript and for valuable comments.


\bibliographystyle{amsplain}

\begin{thebibliography}{10}

\bibitem{GrimBez}
C.~Bezuidenhout and G.~Grimmett, \emph{The critical contact process dies out},
  Ann. Probab. \textbf{18} (1990), 1462--1482.

\bibitem{Durrett1}
M.~Bramson, R.~Durrett, and R.~H. Schonmann, \emph{The contact process in a
  random environment}, Ann. Probab. \textbf{19} (1991), 960--983.

\bibitem{Broman}
E.~I. Broman, \emph{Stochastic {D}omination for a {H}idden {M}arkov {C}hain
  with {A}pplications to the {C}ontact {P}rocess in a {R}andomly {E}volving
  {E}nvironment}, Ann. Probab. \textbf{35} (2007), 2263--2293.

\bibitem{Durrett88}
R.~Durrett, \emph{Lecture notes on paricle systems and percolation}, Wadsworth
  and Brooks/Cole Advances Books and Software, (1988).

\bibitem{Harris1}
T.~E. Harris, \emph{Contact interaction on a lattice}, Ann. Probab. \textbf{2}
  (1974), 969--988.

\bibitem{Extinction1}
A.~Klein, \emph{Extinction of contact and percolation processes in a random
  envionment}, Ann. Probab. \textbf{22} (1994), 1227--1251.

\bibitem{Liggett85}
T.~M. Liggett, \emph{Interacting {P}article {S}ystems}, Springer, (1985).

\bibitem{Liggett1}
\bysame, \emph{The survival of one-dimensional contact processes in random
  environments}, Ann. Probab. \textbf{20} (1992), 696--723.

\bibitem{Liggett99}
\bysame, \emph{Stochastic interacting systems: contact, voter and exclusion
  processes}, Springer, (1999).

\bibitem{Domination}
T.~M. Liggett, R.~H. Schonmann, and A.~M Stacey, \emph{Domination by product
  measures}, Ann. Probab. \textbf{25} (1997), 71--95.

\bibitem{Survival1}
C.~M. Newman and S.~B. Volchan, \emph{Persistent survival of one-dimensional
  contact processes in random environments}, Ann. Probab. \textbf{24} (1996),
  411--421.

\bibitem{Rem}
D.~Remenik, \emph{The contact process in a dynamic random environment}, Ann.
  Appl. Probab. \textbf{18} (2008), 2392--2420.
\end{thebibliography}

\end{document}